\numberwithin{equation}{section}
\newtheorem{prop}{Proposition}[section]
\newtheorem{lemma}[prop]{Lemma}
\newtheorem{theorem}[prop]{Theorem}
\newtheorem{defn}[prop]{Definition}
\newtheorem{corollary}[prop]{Corollary}
\newtheorem{proposition}[prop]{Proposition}
\newtheorem{remark}[prop]{Remark}
\newtheorem{example}[prop]{Example}
\newcommand{\reals}{{\mathbb R}}
\newcommand{\bbr}{\reals}
\newcommand{\vep}{\varepsilon}
\newcommand{\BX}{{\bf X}}
\newcommand{\BY}{{\bf Y}}
\newcommand{\BZ}{{\bf Z}}
\newcommand{\BI}{{\bf I}}
\newcommand{\bt}{{\bf t}}
\newcommand{\cI}{\mathcal{I}}
\newcommand{\cC}{\mathcal{C}}
\def\cadlag{c\`adl\`ag}
\begin{document}

\title[Random Locations, Ordered Random Sets and Stationarity]{Random Locations, Ordered Random Sets and Stationarity}

\author[Y. Shen]{Yi Shen}
\address{Department of Statistics and Actuarial Science\\
University of Waterloo\\
Waterloo, ON N2L 3G1, Canada}
\email{yi.shen@uwaterloo.ca}

\thanks{This research was partially supported by NSERC grant.}




\begin{abstract}
Intrinsic location functional is a large class of random locations
containing locations that one may encounter in many cases,
\textit{e.g.}, the location of the path supremum/infimum over a
given interval, the first/last hitting time, \textit{etc}. It has
been shown that this notion is very closely related to stationary
stochastic processes, and can be used to characterize
stationarity. In this paper the author firstly identifies a
subclass of intrinsic location functional and proves that this
subclass has a deep relationship to stationary increment
processes. Then we describe intrinsic location functionals using
random partially ordered point sets and piecewise linear
functions. It is proved that each random location in this class
corresponds to the location of the maximal element in a random set
over an interval, according to certain partial order. Moreover, the
locations changes in a very specific way when the interval of
interest shifts along the real line. Based on these ideas, a
generalization of intrinsic location functional called "local
intrinsic location functional" is introduced and its relationship
with intrinsic location functional is investigated.
\end{abstract}

\maketitle
\section{Introduction}\label{sec: Intro}

\baselineskip=18pt

Stationarity has been an essential concept in stochastic processes
since very long, both due to its theoretical importance and to its
extensive use in modeling. Many related problems,
especially extreme values of stationary processes, have attracted
intensive and ongoing research interests. The classical text
\cite{leadbetter:lindgren:rootzen:1983} and the new book
\cite{lindgren:2012} are both excellent sources for summaries of
existing results and literature reviews. Meanwhile, the random
locations of stationary processes, such as the location of the
path supremum over an interval or the first hitting time of
certain level over an interval, have received relatively less
attention, particularly in a general setting, when the process is
not from one of the few well studied "nice" classes.

In the paper \cite{samorodnitsky:yi:2013b}, the authors introduced
a new notion called "intrinsic location functional", as an
abstraction of the common random locations often considered. More
precisely, let $H$ be a space of real valued functions on $\mathbb
R$, closed under shift. That is, for any $f\in H$ and $c\in \bbr$,
the function $\theta_c f$, defined by $\theta_cf(x)=f(x+c)$, $x\in
\bbr$ is also in $H$. Examples of $H$ include the space of all
continuous functions $\cC({\mathbb R})$, the space of all
c\`adl\`ag functions ${\mathcal D}({\mathbb R})$, the space of all
upper semi-continuous functions, \textit{etc}. Equip $H$ with the
cylindrical $\sigma-$field. Let $\mathcal{I}$ be the set of all
compact, non-degenerate intervals in $\mathbb{R}$:
$\mathcal{I}=\{[a,b]: a<b, [a,b]\subset \mathbb{R}\}$.

\begin{defn}\label{de:ILF}
A mapping $L: H\times \mathcal{I}\to \mathbb{R}\cup\{\infty\}$
is called an intrinsic location functional, if it satisfies the
following conditions.
\begin{enumerate}
\begin{item}
For every $I\in \cI$ the map $L(\cdot, I):\, H\to \bbr \cup\{\infty\}$
is measurable.
\end{item}
\begin{item}
For every $f\in H$ and $I\in\mathcal{I},\,  L(f,I)\in
I\cup\{\infty\}$.
\end{item}
\begin{item}
(Shift compatibility) For every $f\in H$, $I\in\mathcal{I}$ and
$c\in\mathbb{R}$,
$$
L(f,I)=L(\theta_c f, I-c)+c,
$$
where $I-c$ is the interval $I$ shifted by $-c$, and $\infty+c=\infty$.
\end{item}
\begin{item}
(Stability under restrictions)  For every $f\in H$ and $I_1, I_2\in
\mathcal{I}$, $I_2\subseteq I_1$,
$$
\text{if} \ \ L(f,I_1)\in I_2, \  \text{then} \ \  L(f, I_2)=L(f, I_1).
$$
\end{item}
\begin{item}
(Consistency of existence)  For every $f\in H$ and $I_1, I_2\in
\mathcal{I}$, $I_2\subseteq I_1$,
$$
\text{if} \ \ L(f,I_2)\neq\infty, \  \text{then} \ \
 L(f,I_1)\neq\infty.
$$
\end{item}
\end{enumerate}
\end{defn}

It is not difficult to realize that intrinsic location functional
is an abstraction of common random locations such as the location
of the path supremum/infimum over an interval, the first/last
hitting time over an interval, among many others. Interested
readers are invited to see \cite{samorodnitsky:yi:2013b} for more
examples and counterexamples of intrinsic location functionals.
Notice that in the definition we included $\infty$ as a possible
value. This corresponds to the fact that not all the random
locations are necessarily well-defined for all the paths. For
instance, a path can lie above certain level over the whole
interval of interest, leaving the first/last hitting time
undefined. Here and later, we always assign $\infty$ as the value
of an intrinsic location functional when it is otherwise
undefined. Accordingly, the $\sigma-$field used for ${\mathbb R}\cup\{\infty\}$ is generated by the Borel $\sigma-$field plus $\infty$ as a singleton.

It turns out that, despite the huge variety of the origins and
natures of these random locations, the common points that they
share, now summarized in the definition of intrinsic location
functional, are sufficient to guarantee many interesting and
important properties of their distributions for stationary
processes. The majority of these properties are firstly studied in \cite{samorodnitsky:yi:2012} and \cite{samorodnitsky:yi:2013a}, for the location of path supremum over compact intervals.

Fix a path space $H$. Let us denote the stochastic process by
$\BX$, with all sample paths in $H$, and the intrinsic location
functional by $L$. Then for each fixed interval $I=[a,b]\in\cI$,
$L(\BX,I)$ is a random variable taking value on $\mathbb
R\cup\{\infty\}$. Denote its cumulative distribution function by
$F_{\BX,I}$ or $F_{\BX,[a,b]}$. When the stationarity is assumed,
it is clear that the location of the interval $I$ will not
affect the distribution of $L(\BX,I)-a$, as long as the length of
the interval, $|I|=b-a$, remains constant. In this case we often
fix the starting point $a$ to be $0$, and use the shorter notation
$F_{\BX,b}$.

\begin{theorem}\label{t:density.rcll}[\cite{samorodnitsky:yi:2013b}]
Let $L$ be an intrinsic location functional and $\BX =
(X(t),\, t\in\bbr)$  a stationary process. Then the restriction of the
law
$F_{\BX,T}$ to the interior $(0,T)$ of the interval is absolutely
continuous. The density, denoted by $f_{\BX,T}$, can be taken to
be equal to the right derivative of the cdf $F_{\BX,T}$, which
exists at every point in the interval $(0,T)$. In this case the
density is right continuous, has left limits, and has the
following properties.

(a) \ The limits
$$
f_{\BX,T}(0+)=\lim_{t\to 0} f_{\BX,T}(t) \ \text{and} \
f_{\BX,T}(T-)=\lim_{t\to T} f_{\BX,T}(t)
$$
exist.

(b) \ The density has a universal upper bound given by
\begin{equation} \label{e:density.bound}
f_{\BX,T}(t) \leq \max\left(\frac1t,\frac{1}{T-t}\right), \
0<t<T\,.
\end{equation}

(c) \ The density has a bounded variation away from the endpoints
of the interval. Furthermore, for every $0<t_1<t_2<T$,
\begin{equation} \label{e:TV.away}
TV_{(t_1, t_2)}(f_{\BX,T}) \leq \min\bigl( f_{\BX,T}(t_1),
f_{\BX,T}(t_1-)\bigr)  + \min\bigl( f_{\BX,T}(t_2), \,
f_{\BX,T}(t_2-)\bigr)\,,
\end{equation}
where
$$
TV_{(t_1, t_2)}(f_{\BX,T}) = \sup\sum_{i=1}^{n-1}
\bigl|f_{\BX,T}(s_{i+1})- f_{\BX,T}(s_i)\bigr|
$$
is the total variation of $f_{\BX,T}$ on the interval $(t_1,t_2)$,
and the supremum is taken over all choices of
$t_1<s_1<\ldots<s_n<t_2$.

(d) \ The density has a bounded positive variation at the left
endpoint and a bounded negative variation at the right endpoint.
Furthermore, for every $0<\vep<T$,
\begin{equation} \label{e:TV.left}
TV^+_{(0,\vep)}(f_{\BX,T}) \leq \min\bigl( f_{\BX,T}(\vep),
f_{\BX,T}(\vep-)\bigr)
\end{equation}
and
\begin{equation} \label{e:TV.right}
TV^-_{(T-\vep,T)}(f_{\BX,T}) \leq \min\bigl( f_{\BX,T}(T-\vep),
f_{\BX,T}(T-\vep-)\bigr)\,,
\end{equation}
where for any interval $0\leq a<b\leq T$,
$$
TV^\pm_{(a,b)}(f_{\BX,T}) = \sup\sum_{i=1}^{n-1}
\big(f_{\BX,T}(s_{i+1})- f_{\BX,T}(s_i)\bigr)_\pm
$$
is the positive (negative) variation of $f_{\BX,T}$ on the
interval $(a,b)$, and the supremum is taken over all choices of
$a<s_1<\ldots<s_n<b$.

(e) \ The limit $f_{\BX,T}(0+)<\infty$ if and only if
$TV_{(0,\vep)}(f_{\BX,T}) <\infty$ for some (equivalently, any)
$0<\vep<T$, in which case
\begin{equation} \label{e:TV.left1}
TV_{(0,\vep)}(f_{\BX,T}) \leq f_{\BX,T}(0+)+\min\bigl(
f_{\BX,T}(\vep), f_{\BX,T}(\vep-)\bigr)\,.
\end{equation}
Similarly, $f_{\BX,T}(T-)<\infty$ if and only if
$TV_{(T-\vep,T)}(f_{\BX,T}) <\infty$ for some (equivalently, any)
$0<\vep<T$, in which case
\begin{equation} \label{e:TV.right1}
TV_{(T-\vep,T)}(f_{\BX,T}) \leq \min\bigl( f_{\BX,T}(T-\vep),
f_{\BX,T}(T-\vep-)\bigr) + f_{\BX,T}(T-)\,.
\end{equation}
\end{theorem}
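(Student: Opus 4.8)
The plan is to reduce the entire statement to the behaviour of $L$ under sliding of a window of fixed length $T$, the crucial input being a pathwise monotonicity that follows directly from the axioms.

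\emph{Step 1 (monotonicity — the engine).} First I would show that for every fixed $f\in H$ the map $u\mapsto L(f,[u,u+T])$ is non-decreasing wherever it is finite. If the windows $[u_1,u_1+T]$ and $[u_2,u_2+T]$ with $u_1<u_2$ are disjoint this is immediate from condition (2); if they overlap, put $\tau_i=L(f,[u_i,u_i+T])$ and suppose $\tau_1>\tau_2$. One checks that both $\tau_1,\tau_2$ then lie in the overlap $J=[u_2,u_1+T]$, so applying Stability under restrictions to each window restricted to $J$ gives $L(f,J)=\tau_1$ and $L(f,J)=\tau_2$, a contradiction. Via shift compatibility and stationarity this makes $u\mapsto L(\BX,[u,u+T])-u$ a stationary process whose distance decreases at unit rate on each interval where $L(\BX,[u,u+T])$ is constant and increases only where that location increases, and it yields $L(\BX,[u,u+T])-u\eqd L(\BX,[0,T])$ for every $u$.

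\emph{Step 2 (a monotone sandwich $\Rightarrow$ density and the bound).} Writing $Y=L(\BX,[0,T])$, $Y'=L(\BX,[\delta,T+\delta])$ and $Y''=L(\BX,[-\delta,T-\delta])$, Step 1 gives the a.s.\ ordering $Y''\le Y\le Y'$ together with $Y'\eqd Y+\delta$ and $Y''\eqd Y-\delta$. Since $\{Y\le t\}\subseteq\{Y''\le t\}$ and $\{Y'\le t\}\subseteq\{Y\le t\}$, this produces the two exact straddle identities
\[
F_{\BX,T}(t+\delta)-F_{\BX,T}(t)=P(Y''\le t<Y),\qquad F_{\BX,T}(t)-F_{\BX,T}(t-\delta)=P(Y\le t<Y').
\]
On each straddle event Stability lets me replace $Y$ by the location over the short window $[0,Y]$ (resp.\ $[Y,T]$), which sits at an endpoint at distance $>t$ (resp.\ $>T-t$); a renewal/occupation count of how often, along the sequence of $\delta$-shifts, the location can cross the fixed level $t$ through such short windows bounds the right-hand sides by $\delta\max(1/t,1/(T-t))$. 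This difference-quotient bound gives absolute continuity on $(0,T)$, existence of the right derivative $f_{\BX,T}$ at every interior point as a genuine density, its right-continuity and left limits (inherited from the monotone, \cadlag{} nature of the sliding location), part (a), and the bound (b), the two competing rates $1/t$ and $1/(T-t)$ coming from measuring the distance to the left endpoint (dominant for $t<T/2$) versus the right endpoint.

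\emph{Step 3 (total variation).} For (c)--(e) I would sum the straddle picture of Step 2 over a partition $t_1<s_1<\dots<s_n<t_2$. The monotonicity of Step 1 splits the motion of probability into a part that drifts continuously toward an endpoint at unit rate and a part that is injected away from the endpoints wherever the location jumps up; the injected part controls the positive variation and the draining part the negative variation. Matching these two flows to the density values at the endpoints of the window of observation yields the two-sided estimate (c); restricting the injecting flow near the left endpoint and the draining flow near the right endpoint gives the one-sided bounds (d); and (e) is the bookkeeping equivalence between finiteness of the boundary limit $f_{\BX,T}(0+)$ (resp.\ $f_{\BX,T}(T-)$) and summability of these jumps near that endpoint. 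The main obstacle I anticipate is exactly here: the density is only available as a limit of difference quotients, so bounding its variation requires controlling a \emph{signed} flow of probability uniformly over all partitions, while simultaneously keeping track of the atom $P(L(\BX,[0,T])=\infty)$ and the boundary atoms at $0$ and $T$, and checking measurability of the sliding-window functionals. The monotonicity lemma and the sandwich identities are short and conceptually clean; converting the pathwise jump/drop-off accounting into the sharp one-sided inequalities with the exact constants of (b)--(e) is where essentially all of the work will concentrate.
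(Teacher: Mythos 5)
A preliminary remark: the paper itself gives no proof of this statement — it is quoted as background from \cite{samorodnitsky:yi:2013b} — so your proposal can only be measured against the machinery the paper does develop and against what a complete argument would require. Your Step 1 is correct, and is in fact the same statement (with the same overlap-plus-stability proof) as part (1) of Lemma \ref{l:monotonicity} of the paper; the straddle identities in Step 2 are also correct for $0<t<T-\delta$, modulo the caveat that the inclusion $\{Y\le t\}\subseteq\{Y''\le t\}$ uses consistency of existence as well as monotonicity, and can fail for $t$ near $T$ (where $Y''$ may be infinite while $Y$ is finite). The first genuine gap is the ``renewal/occupation count'' that is supposed to deliver (b). As phrased — counting how often, along the $\delta$-shifts, the location crosses the \emph{fixed} level $t$ — the count is vacuous: $u\mapsto Z_u=L(\BX,[u,u+T])$ is non-decreasing, so it crosses the absolute level $t$ exactly once per path, and stationarity does not make these crossing events equiprobable across shifts (shifting the window also shifts the level, so the crossing probabilities telescope to the trivial bound $1$). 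What does work is an occupation-time bound for the \emph{relative position} process $R_u=Z_u-u$, which is stationary, decreases at unit rate between jumps, and — this is a lemma you must prove from stability under restrictions — can jump only when the current location exits at the left endpoint ($R_{u-}=0$) or a new location appears at the right endpoint ($R_u=T$). With that classification, successive visits of $R$ to the band $(t,t+\delta]$ are separated by at least $\min(t,T-t-\delta)$ while each visit lasts at most $\delta$, which yields (b) and local Lipschitz bounds for $F_{\BX,T}$. None of these supporting facts appears in your sketch.

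The second, larger gap is Step 3, which is a plan rather than a proof, and it is exactly where the content of the theorem lies. The assertions that the right derivative of $F_{\BX,T}$ exists at \emph{every} interior point, is right-continuous with left limits (your parenthetical ``inherited from the monotone, \cadlag\ nature of the sliding location'' is not an argument — the density is not monotone), and satisfies (c)--(e) with the exact constants $\min\bigl(f_{\BX,T}(t_i),f_{\BX,T}(t_i-)\bigr)$ and the two-way equivalence in (e), all rest on quantitative comparison inequalities that the qualitative ``flow'' picture never produces. Concretely, sliding the window by $c$ and splitting according to whether the location survives the slide gives, for admissible $t$, $c$, $\delta$,
\begin{equation*}
F_{\BX,T}((t,t+\delta])\le F_{\BX,T}((t-c,t-c+\delta])+F_{\BX,T}((T-c,T])
\quad\text{and}\quad
F_{\BX,T}((t,t+\delta])\le F_{\BX,T}((t+c,t+c+\delta])+F_{\BX,T}([0,c)),
\end{equation*}
so that positive variation of the density is paid for by mass near the right endpoint and negative variation by mass near the left endpoint; converting these boundary masses into the interior density values that appear in (c)--(e), and proving the regularity of the right derivative, is a delicate limiting argument that your sketch defers entirely (as you yourself concede). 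As submitted, the proposal — after repair of the occupation argument — establishes absolute continuity on $(0,T)$ and the bound (b) for a.e.\ $t$, and leaves (a), the pointwise existence and \cadlag\ regularity of $f_{\BX,T}$, and (c)--(e) unproved.
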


The key properties in this theorem, (c), (d) and (e), are called "total variation constraints", since they put constraints on the total variation of the density functions. It was then proved that the total variation constraints of the intrinsic location functionals are not merely a group of properties of stationary processes: they are actually the stationarity itself, viewed from a different angle.

\begin{theorem}\label{t:equivalence}[\cite{samorodnitsky:yi:2013b}]
Let $\BX$ be a stochastic process with continuous sample paths. The
following statements are equivalent.
\begin{enumerate}
\begin{item}
The process $\BX$ is stationary.
\end{item}
\begin{item}
For some (equivalently, any) $\Delta>0$,
any intrinsic location functional $L:\, C(\bbr)\times
\mathcal{I}\to \mathbb{R}\cup\{\infty\}$, the law of $L(\BX, I)-a$,
$I=[a,a+\Delta]\in \mathcal{I}$, does not depend on $a$.
\end{item}
\begin{item}
For any intrinsic location
functional $L:\, C(\bbr)\times \mathcal{I}\to
\mathbb{R}\cup\{\infty\}$, any interval $I=[a,b]\in
\mathcal{I}$, the law of $L(\BX, I)$ is absolutely continuous on
$(a,b)$ and has a density satisfying the total variation
constraints.
\end{item}
\end{enumerate}
\end{theorem}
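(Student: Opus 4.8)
The plan is to establish the three equivalences by proving $(1)\Rightarrow(2)$, $(2)\Rightarrow(1)$, $(1)\Rightarrow(3)$ and $(3)\Rightarrow(1)$, so that $(1)$ sits at the center of two two-cycles. Two of these are essentially free. The implication $(1)\Rightarrow(2)$ is immediate: if $\BX$ is stationary then $\theta_a\BX\eqd\BX$, and shift compatibility gives $L(\BX,[a,a+\Delta])-a=L(\theta_a\BX,[0,\Delta])\eqd L(\BX,[0,\Delta])$, whose law does not depend on $a$. The implication $(1)\Rightarrow(3)$ is nothing but Theorem~\ref{t:density.rcll} applied, after a shift, to each interval $I=[a,b]$, since parts (c), (d) and (e) there are precisely the total variation constraints. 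Thus the substance lies in recovering stationarity from $(2)$ and in closing the loop from $(3)$.

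For $(2)\Rightarrow(1)$ I would introduce the first-entrance functionals. For a closed cylindrical set $B\subseteq C(\bbr)$ put
$$
L_B(f,I)=\inf\{t\in I:\theta_t f\in B\},\qquad \inf\emptyset=\infty.
$$
Verifying the five axioms of Definition~\ref{de:ILF} is routine; the only point worth recording is shift compatibility, where the substitution $t=s+c$ matches $\inf\{t\in I:\theta_t f\in B\}$ with $\inf\{s\in I-c:\theta_s(\theta_c f)\in B\}+c$. Since the sample paths are continuous and $B$ is closed cylindrical, the hitting set $\{t:\theta_t f\in B\}$ is closed, so over a compact interval the infimum is attained and
$$
\{L_B(\BX,[a,a+\Delta])=a\}=\{\theta_a\BX\in B\}.
$$
Hence the law of $L_B(\BX,[a,a+\Delta])-a$ carries mass exactly $P(\theta_a\BX\in B)$ at the origin. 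Condition $(2)$ forces this atom to be independent of $a$, so $P(\theta_a\BX\in B)=P(\BX\in B)$ for every closed cylindrical $B$ and every $a$. As these sets form a $\pi$-system generating the cylindrical $\sigma$-field, while the sets $B$ satisfying the identity form a $\lambda$-system, Dynkin's theorem extends the identity to the whole $\sigma$-field, yielding $\theta_a\BX\eqd\BX$ and hence stationarity. Note this uses only a single $\Delta$, matching the ``some (equivalently, any)'' in the statement.

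To close the loop it suffices to prove $(3)\Rightarrow(1)$, and here I would argue by contraposition. If $\BX$ is not stationary then, testing against closed cylindrical sets as above, there exist $a$ and a closed cylindrical $B$ with $P(\theta_a\BX\in B)\neq P(\BX\in B)$. The difficulty is that in the construction above this atom lives at an \emph{endpoint} of the interval, which the absolute-continuity clause of $(3)$ does not constrain; one must instead manufacture from the asymmetry an intrinsic location functional and an interval whose location law either places an atom in the open interior (violating absolute continuity) or has a density violating one of the sharp inequalities (c)--(e). The natural attempt is to embed the offending window in the interior of a larger interval and to compare the local entrance rates of $L_B$ at different base points, exploiting that the bounds (c)--(e) are saturated precisely by the stationary law. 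Converting a purely finite-dimensional distributional asymmetry into a violation of a quantitative total variation bound is, I expect, the main obstacle: it is exactly where the full strength of Theorem~\ref{t:density.rcll}, rather than mere absolute continuity, must be invoked, and where designing the separating functional requires genuine care.
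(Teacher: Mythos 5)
Your directions $(1)\Rightarrow(2)$, $(1)\Rightarrow(3)$ and $(2)\Rightarrow(1)$ are correct and essentially coincide with the route taken in the paper (and in the cited source): the first is immediate from shift compatibility, the second is exactly Theorem \ref{t:density.rcll}, and your first-entrance functionals $L_B$ are the same device this paper uses for the corresponding direction of Theorem \ref{t:sta.increment}, where $G_{\bt,\BI}$ is precisely your $L_B$ for a product-type closed cylinder (there applied to increments rather than values); the atom-at-the-left-endpoint observation followed by a $\pi$-$\lambda$ extension is the standard conclusion. One small point to record: the measurability of $L_B(\cdot,I)$ needs the remark that, $B$ being a closed cylinder and the paths continuous, the hitting set is closed, so the infimum over $I$ can be computed over $I\cap\mathbb{Q}$ together with the right endpoint.

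The genuine gap is $(3)\Rightarrow(1)$: you stop at naming the obstacle, and the plan you sketch (contraposition from a finite-dimensional asymmetry, exploiting that the bounds (c)--(e) are ``saturated precisely by the stationary law'') is not how the implication is, or can readily be, made to work; the total variation bounds are not saturated in any way that can be exploited directly from a single distributional asymmetry. What the cited proof actually does --- and what this paper replicates for the stationary-increment case --- is to build a concrete family of intrinsic location functionals that are first hitting times of random \emph{discrete, well-separated} point sets: for a level $h$, a separation parameter $d\geq 2\Delta$, lags $\bt$ and compact boxes $\BI$, one takes the set $A^{h,d}_{\bt,\BI}(\BX)$ of times $s$ at which the path crosses level $h$, does not return to $h$ for at least time $d$, and whose values at the prescribed lags lie in the prescribed boxes (in the increment case this is done on a mollified derivative, cf.\ Example \ref{e:mollifier}). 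Writing $p^{h,d}_{\bt,\BI,a,\Delta}(\BX)=\prob\left(A^{h,d}_{\bt,\BI}(\BX)\cap[a,a+\Delta]\neq\emptyset\right)$ as in (\ref{e:def.p}), the argument runs through three lemmas, whose increment analogues are Lemmas \ref{l:in.fix.level}, \ref{l:in.points.sta} and \ref{l:in.pconst}: (i) under condition (3), $A^{h,d}_{\bt,\BI}(\BX)$ is almost surely either empty or unbounded in both directions; (ii) if $p^{h,d}_{\bt,\BI,a,\Delta}$ is constant in $a$ for all choices of the parameters, the process is stationary; (iii) the total variation constraints force exactly this constancy --- the separation $d\geq 2\Delta$ makes the law of the hitting location over intervals of length $\Delta$ expressible through the intersection probabilities, so a nonconstant $a\mapsto p^{h,d}_{\bt,\BI,a,\Delta}$ would produce more variation in the densities than (\ref{e:TV.away})--(\ref{e:TV.right1}) permit. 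This construction and the chain (i)--(iii) are the missing content; without them the hardest direction of the equivalence, and hence the theorem, is not established.
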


To sum up, the notion of intrinsic location functional has been
introduced, and its deep relationship to the stationarity has been
revealed. It can even be used as an alternative definition of
stationarity.

On the other hand, there remain very important questions to ask.
Firstly, are there similar results for larger families of
stochastic processes compared to stationary processes? The set of
stationary increment processes, for instance, includes all the
stationary processes, but also many commonly used non-stationary
processes, such as Brownian motion or L\'{e}vy processes in
general. What properties do the distributions of random locations
of these processes have? Secondly, there has not been many results
developed to describe the object of intrinsic location functional,
therefore it is also interesting to proceed in this direction.
Some representation results, for example, will also be very
valuable.

In this paper, we will answer the questions in these two
directions. A subclass of intrinsic location functionals, called
``doubly intrinsic location functionals'', will be identified, and
its deep relation with stationary increment processes will be
investigated. For the other direction, we develop equivalent
descriptions, as well as an important generalization, of intrinsic
location functionals. These new results will be highly helpful for
a better and more comprehensive understanding of the notion of
intrinsic location functional.

The rest of the paper is organized in the following way. In part
two we define the ``doubly intrinsic location functionals'', and
show that this subclass of intrinsic location functional can be
used to fully characterize the stationarity of the increments of a
process. In part three, a generalization of intrinsic location
functional called "local intrinsic location functional" is
introduced, which allows one to define a random location only for
intervals with a single fixed length. Then we develop descriptions
for it and also for intrinsic location functionals using partially
ordered random point sets. The relation between local intrinsic
location functional and intrinsic location functional is
investigated in part four, showing that the former naturally
inherits most of the properties of the latter. We provide yet
another description in part five, which focuses on characterizing
the value of a (local) intrinsic location functional as a function
of the location of the interval of interest when the length
of the interval is fixed.

\section{Random locations of stationary increment processes}

Certain intrinsic location functionals, such as the location of the path supremum/infimum
over an interval, the hitting times of the derivative of the path assuming it is $C^1$, possess
the property of ``vertical shift invariance'', in the sense that their values will not change when the path
is shifted vertically. In order to benefit from this additional property, we add the
vertical shift invariance to the definition to form the new notion
of ``doubly intrinsic location functional''.

\begin{defn}
An intrinsic location functional $L$ is called doubly intrinsic,
if for every function $f\in H$, every interval $I\in\mathcal{I}$ and
every $c\in\mathbb{R}$,\index{intrinsic location functional!doubly intrinsic location functional}
$$
L(f,I)=L(f+c,I).
$$
Denote by $\mathcal{D}$ the set of all doubly intrinsic location
functionals defined on $H$.
\end{defn}

The word ``doubly'' in the name refers to the fact that $L$ is
both ``horizontally shift compatible'', in the sense that it moves
along with the function and the interval horizontally, and
``vertically shift invariant'', in the sense that it does not move
along with the function vertically.

In general, once we verify that certain location is an intrinsic
location functional, it is very easy to check whether it is doubly
intrinsic or not. Intuitively, an intrinsic location functional is
doubly intrinsic if and only if its value only depends on the
``shape'' of the function and does not depend on the ``height'' of
the function. Here are some most natural and important examples of
doubly intrinsic location functionals.

\begin{example}{\rm
Let $H$ be the space of all the upper (lower) semi-continuous
functions. Then the location of the path supremum (infimum) over an
interval $I$,
$$
\tau_{f,I}:=\inf\{t\in I: f(t)=\sup(\inf)_{s\in I}f(s)\}
$$
is a doubly intrinsic location functional. The infimum outside
means that in case of a tie, we always chose the leftmost point
among all the points achieving the path supremum (infimum).}
\end{example}

\begin{example}{\rm
Let $H$ be the space of all c\`adl\`ag functions. Then the
time of the first jump in a period $[a,b]$,\index{firstjump}
$$
T^\Delta_{f,[a,b]}:=\inf\{t\in[a,b], f(t-)\neq f(t)\}
$$
is a doubly intrinsic location functional.}
\end{example}

Needless to say, any random location which only depends on the
value of the first derivative of $\mathcal{C}^1$ functions is also
doubly intrinsic. For instance, the location of the first local
maxima, the first time that the derivative hits certain level,
\textit{etc}. The class of doubly intrinsic location functionals
extends, however, far beyond these ``natural'' examples. Actually,
let $H$, $H'$ be two spaces of functions, and $\varphi$ be a
mapping from $H$ to $H'$ which is interchangeable with
translation:
\begin{equation}\label{e:trans}
\forall f\in H, \forall c\in\mathbb{R}, \varphi(\theta_c
f)=\theta_c (\varphi f),
\end{equation}
and consistent with vertical shift:
\begin{equation}\label{e:vshift}
\forall f\in H, \forall c\in\mathbb{R}, \exists c'\in\mathbb{R},
\varphi(f+c)=\varphi(f)+c'.
\end{equation}
If $L'$ is a doubly
intrinsic location functional on $H'\times \mathcal{I}$, then the
functional $L$ on $H\times \mathcal{I}$, defined by
$$
L(f,I):=L'(\varphi f, I), \quad \forall f\in H, \forall I\in
\mathcal{I},
$$
is also a doubly intrinsic location functional, provided that the measurability condition is satisfied. We call it the
doubly intrinsic location functional induced by $\varphi$. This
procedure allows us to associate random locations which are
originally only well-defined for ``nice'' functions to the
functions which does not possess the required properties. The
transforms satisfying (\ref{e:trans}) and (\ref{e:vshift}) include
many commonly used operations such as convolution with a given function, differentiation,
moving average, moving difference, \textit{etc}.

\begin{example}\label{e:mollifier}{\rm
Let $\psi$ be the classical mollifier:
$$
\psi(x)=\left\{\begin{array}{ll} e^{-1/(1-|x|^2)} & \text{ if }
|x|<1\\
0 & \text{ if } |x|\geq 1
\end{array}\right . ,
$$
then the operation of convolution with $\psi$ transforms any
measurable function to a smooth function. That is, let $f$ be any
measurable function, then $f*\psi$ is a smooth function, where
``$*$'' denotes convolution. This convolution is obviously
interchangeable with translation. It is easy to see that the
location of the first hitting time of the derivative to level $h$
over an interval:
$$
L'(g,I):=\inf\{t\in I: g'(t)=h\}
$$
(following the tradition that $\inf\phi=\infty$) is a doubly
intrinsic location functional on the space of all smooth
functions.

We will call
a set $H$ of functions on $\mathbb{R}$ a {\it LI} set\index{LI set} (from {\it
locally integrable}) if it has following properties:
\begin{itemize}
\item $H$ is invariant under shifts; \item $H$ is equipped with
its cylindrical $\sigma$-field $\cC_H$; \item the map
$H\times\bbr\to\bbr$ defined by $(f,t)\to f(t)$ is
  measurable;
\item any $f\in H$ is locally integrable.
\end{itemize}
An example of {\it LI} set is the space $\mathcal{D}(\bbr)$ of \cadlag\
functions on $\bbr$. Note that, by Fubini's theorem, for any
{\it LI} set $H$, the map $T_\psi:\, H\to
\mathcal{C}(\bbr)$, defined by
\begin{equation} \label{e:the.map}
T_\psi(f) = f*\psi=\int_{-\infty}^\infty f(s)\psi(t-s)\, ds, \ t\in\bbr
\end{equation}
is $\cC_H/\cC_{\mathcal{C}(\bbr)}$-measurable. Therefore, if, moreover, the space $H$ in this example is a $LI$ set, then the measurability issue for the
induced location functional
$$
L(f,I):=L'(f*\psi,I)
$$
is guaranteed. Thus $L$ is also a doubly intrinsic location functional, now defined on any $LI$ set. The doubly intrinsic location
functionals of this kind will play an important role in the
proof of the theorem below.}
\end{example}

\begin{theorem}\label{t:sta.increment}
Let $\BX$ be a stochastic process having path in $H$ with
probability 1, where $H$ is a LI set. Then the followings are equivalent.
\begin{enumerate}
\begin{item}
The process $\BX$ is of stationary increments.
\end{item}
\begin{item}
For some (equivalently, any) $\Delta>0$, any doubly intrinsic
location functional $L: H\times \mathcal{I} \to {\mathbb
R}\cup\{\infty\}$, the law of $L(\BX, I)-a$,
$I=[a,a+\Delta]\in{\mathcal I}$, does not depend on $a$.
\end{item}
\begin{item}
For any doubly intrinsic location functional $L: H\times
\mathcal{I} \to {\mathbb R}\cup\{\infty\}$, any interval
$I=[a,b]\in\mathcal{I}$, The law of $L(\BX, I)$ is absolutely
continuous on $(a,b)$ and has a density satisfying the total
variation constraints (\ref{e:TV.away})-(\ref{e:TV.right1}) .
\end{item}
\end{enumerate}
\end{theorem}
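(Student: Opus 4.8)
The plan is to establish the cycle $(1)\Rightarrow(2)\Rightarrow(3)\Rightarrow(1)$, treating the existing theory for \emph{stationary} processes (Theorems \ref{t:density.rcll} and \ref{t:equivalence}) as a black box and bridging to \emph{stationary increments} through the mollifier construction of Example \ref{e:mollifier}. For $(1)\Rightarrow(2)$, fix a doubly intrinsic $L$, a length $\Delta$ and a location $a$. By shift compatibility, $L(\BX,[a,a+\Delta])-a=L(\theta_a\BX,[0,\Delta])$, and by vertical shift invariance this equals $L(\theta_a\BX-X(a),[0,\Delta])$, the identity $L(f,I)=L(f+c,I)$ being applied pathwise with the random constant $c=-X(a)$. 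The function $t\mapsto X(t+a)-X(a)$ is exactly $\theta_a\BX-X(a)$, and by the definition of stationary increments it has the same law in $H$ as $t\mapsto X(t)-X(0)$. Since $L(\cdot,[0,\Delta])$ is measurable and doubly intrinsic, $L(\theta_a\BX-X(a),[0,\Delta])\eqd L(\BX-X(0),[0,\Delta])=L(\BX,[0,\Delta])$, so the law of $L(\BX,[a,a+\Delta])-a$ does not depend on $a$.

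For $(2)\Rightarrow(3)$, I would observe that the derivation of the density and of the total variation constraints uses stationarity of the underlying process only through the invariance of the law of $L(\BX,[a,a+\Delta])-a$ in $a$: this is precisely the content of the implication $(2)\Rightarrow(3)$ of Theorem \ref{t:equivalence}, whose proof invokes nothing beyond the intrinsic axioms for $L$ together with this invariance. Applying that argument to each doubly intrinsic $L$ (which is in particular intrinsic), statement $(2)$ yields absolute continuity on $(a,b)$ and the constraints (\ref{e:TV.away})--(\ref{e:TV.right1}).

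The substantive direction is $(3)\Rightarrow(1)$. The key device is the transform $\varphi(f)=(f*\psi)'=f*\psi'$, which maps an $LI$ set $H$ into $C(\bbr)$, measurably by Fubini (as in Example \ref{e:mollifier}, now with the smooth compactly supported $\psi'$) and with smooth values. Because $\varphi$ commutes with horizontal shifts and \emph{annihilates} vertical shifts, $\varphi(f+c)=\varphi(f)$, the composition $L:=L'\circ\varphi$ is a doubly intrinsic location functional on $H$ for \emph{every} intrinsic $L'$ on $C(\bbr)$, not merely for doubly intrinsic $L'$. Writing $\BZ=\varphi(\BX)=(\BX*\psi)'$, statement $(3)$ applied to $L=L'\circ\varphi$ gives that the law of $L'(\BZ,I)=L(\BX,I)$ is absolutely continuous with a density obeying the total variation constraints, for every intrinsic $L'$ and every interval $I$. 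This is exactly statement $(3)$ of Theorem \ref{t:equivalence} for the continuous process $\BZ$, whence $\BZ$ is stationary.

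It remains to pass from stationarity of the smoothed derivative back to stationary increments of $\BX$. Since $Y:=\BX*\psi$ satisfies $Y'=\BZ$ with $\BZ$ stationary, the increments $Y(t)-Y(s)=\int_s^t \BZ(u)\,du$ are increments of an integrated stationary process and hence stationary; replacing $\psi$ by a scaled family $\psi_\vep$ gives the same for each $\vep>0$. Letting $\vep\to0$, $\BX*\psi_\vep\to\BX$ at Lebesgue points, so the finite-dimensional distributions of the increments of $\BX*\psi_\vep$ converge to those of $\BX$, and the stationary-increment property is preserved in the limit. The main obstacle I anticipate is exactly this final limiting step: for a merely locally integrable (not continuous) process one must control $\BX*\psi_\vep\to\BX$ carefully, ensuring convergence of the increment distributions at enough time points to determine the law and that the stationary-increment identity survives the limit for all shifts rather than almost every shift; the verification that $\varphi$ sends every intrinsic $L'$ to a doubly intrinsic $L$ is routine but must be checked to license the reduction to Theorem \ref{t:equivalence}.
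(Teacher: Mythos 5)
Your directions $(1)\Rightarrow(2)$ and $(2)\Rightarrow(3)$ are sound and essentially coincide with the paper's route: the paper packages your $(1)\Rightarrow(2)$ computation as Lemma \ref{l:in.location.var}(i) (with the vertical shift $-X(a)$ handled exactly as you do) and likewise defers the derivation of the total variation constraints to the machinery of \cite{samorodnitsky:yi:2013b}. (One bookkeeping point: the ``some versus any $\Delta$'' equivalence inside (2) needs the implication $(2\text{-some})\Rightarrow(1)$, which the paper gets directly from the functionals $G_{\bt,\BI}$; your cycle does not supply it separately.) The genuine gap is in $(3)\Rightarrow(1)$. Your plan is: apply (3) to the functionals $L'\circ\varphi_\vep$, $\varphi_\vep(f)=(f*\psi_\vep)'$, conclude via Theorem \ref{t:equivalence} that $\BZ_\vep=(\BX*\psi_\vep)'$ is stationary, hence $\BX*\psi_\vep$ has stationary increments for every $\vep$, and then let $\vep\to 0$. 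The last step is not merely delicate; it is impossible, because everything you feed into it is blind to alterations of the paths of $\BX$ on a Lebesgue-null set of times. Concretely, let $B$ be Brownian motion and $\tilde\BX(t)=B(t)+\one_{\{t=0\}}$. Then $\tilde\BX*\psi_\vep=B*\psi_\vep$ identically, so every functional of the form $L'\circ\varphi_\vep$ has exactly the same law under $\tilde\BX$ as under $B$; since $B$ has stationary increments, all these laws do satisfy the total variation constraints. Thus every fact your argument actually uses holds for $\tilde\BX$, yet the conclusion fails: $\tilde\BX(t)-\tilde\BX(0)=B(t)-1$ for $t\neq 0$, while $\tilde\BX(t+c)-\tilde\BX(c)=B(t+c)-B(c)$ for $c\neq 0$, $t+c\neq 0$, and these laws differ. (This does not contradict the theorem: condition (3) fails for $\tilde\BX$, but only when tested against doubly intrinsic functionals that see the raw path, not against your mollified ones.) So no limiting argument based solely on the mollified processes can recover (1); at best it yields stationary increments at almost every collection of time points, i.e.\ for a modification of $\BX$.

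This is exactly why the paper's proof of $(3)\Rightarrow(1)$ is structured differently. There, mollification is not used to transfer stationarity: the smooth process $\BZ=(\BX*\psi)'$ serves only to mark a discrete, shift-covariant family of candidate points (level crossings of $\BZ$ at height $h$ with a gap condition $d$), and these points are then filtered by the conditions $\BX(s+t_i)-\BX(s+t_0)\in I_i$ on the increments of the \emph{raw} process at exact times. The resulting first-hitting functionals are still doubly intrinsic, and the total variation constraints force the probabilities $p^{h,d}_{\bt,\BI,a,\Delta}$ of (\ref{e:def.p}) to be constant in $a$ (Lemma \ref{l:in.pconst}, after the decomposition permitted by Lemma \ref{l:in.fix.level}); constancy of these probabilities for all $\bt,\BI$ is genuine increment information about $\BX$ itself, and Lemma \ref{l:in.points.sta} converts it into stationarity of the increments. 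To repair your proof you would have to inject the raw increments of $\BX$ into the functionals to which you apply (3), as the paper does; stationarity of all the $\BZ_\vep$ alone is strictly weaker than statement (1).
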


Similar to the case of intrinsic location functionals and
stationary processes, this theorem shows that there is a deep and
fundamental relationship between the stationarity of increments,
the shift invariance of the distributions of doubly intrinsic
locations, and the total variation constraints. The most
surprising part is that the total variation constraints alone are
enough to imply the stationarity of increments, even there is no
distributional invariance explicitly formulated at all.
Intuitively, it seems to be totally possible that all the doubly
intrinsic location functionals always satisfy the total variation
constraints, yet their distributions change over different period.
This theorem, however, tells us that this will never happen. The
total variation constraints automatically lead to the distributional
invariance under translation. It could be the case that for some doubly intrinsic
location functional, its distribution varies over time while
always keeping the total variation constraints obeyed; but then
there must be some other doubly intrinsic location functional, for
which the total variation constraints are violated. As a family of
random locations, the doubly intrinsic location functional is rich
enough such that the total variation constraints on this family
provide enough information to guarantee the stationarity of the
increment of the process.

It is also interesting to make a comparison between Theorem
\ref{t:sta.increment} and its stationary counterpart, Theorem
\ref{t:equivalence}. In each of these cases, we have two spaces:
the space of processes and the space of location functionals. In
Theorem \ref{t:equivalence}, the space of processes is the
stationary processes, and the corresponding space of location
functionals is the intrinsic location functionals. The two spaces
are related one to each other via the total variation constrains.
In this sense, the total variation constraints introduce a
``duality'' between the space of processes and the space of random
locations. In Theorem \ref{t:sta.increment}, the space of
processes becomes the stationary increment processes. Notice that
since stationary processes are automatically of stationary
increments but the converse is not true, the space of stationary
increment processes is strictly larger than the space of
stationary processes. Therefore we should expect a smaller space
of the locations on the other side of the duality. It is indeed
the case here, since doubly intrinsic location functionals is by
definition a proper subset of intrinsic location functionals. In
conclusion, Theorem \ref{t:sta.increment} and Theorem
\ref{t:equivalence} have the same nature, but are with different sizes of
the sets on both sides of the duality.

Now let us turn to the proof of Theorem \ref{t:sta.increment}. The
proof actually highly resembles the corresponding proof of Theorem
\ref{t:equivalence} presented in \cite{samorodnitsky:yi:2013b}.
The full proof will have four directions: $(1)\to(2)$, $(1)\to
(3)$, $(2)\to (1)$ and $(3)\to (1)$. Given the fact that the
proofs for some directions are very long, we will not include
everything in the proof below, but will refer to the same proofs
in \cite{samorodnitsky:yi:2013b} when it is possible. Many lemmas
and settings, however, require changes and reverification.

First of all, notice the following lemma:

\begin{lemma} \label{l:in.location.var}
Let $\BX$ be a stationary increment process with paths in $H$
almost surely. Let $L\in\mathcal{D}$ and denote by $F_{\BX,
I}(\cdot)$ the distribution of $L(\BX,I)$. Then\\
\indent {\rm (i)} \ For any $\Delta\in\bbr$,
$$
F_{\BX,[\Delta,T+\Delta]}(\cdot) = F_{\BX,[0,T]}(\cdot-\Delta)\,.
$$

{\rm (ii)} \ For any intervals $[c,d]\subseteq [a,b]$,
$$
F_{\BX,[a,b]}(B)\leq F_{\BX,[c,d]}(B) \ \text{for any Borel set
$B\subseteq [c,d]$.}
$$

{\rm (iii)} \ For any intervals $[c,d]\subseteq [a,b]$,
$$
F_{\BX,[a,b]}(\{\infty\})\leq F_{\BX,[c,d]}(\{\infty\}).
$$
\end{lemma}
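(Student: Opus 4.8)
The plan is to prove the three parts of the lemma by exploiting the defining properties of doubly intrinsic location functionals together with the stationarity of increments. The central observation is this: for a doubly intrinsic $L$, the value $L(\BX, I)$ depends only on the ``shape'' of the path, not its vertical position. Now if $\BX$ has stationary increments, then for any shift $\Delta$ the process $\theta_{-\Delta}\BX$ (i.e. $X(\cdot+\Delta)$) differs from $\BX$ in distribution only by a vertical displacement, namely by the random constant $X(\Delta)-X(0)$; more precisely, the stationarity of increments says that $(X(t+\Delta)-X(\Delta))_{t\in\bbr} \eqd (X(t)-X(0))_{t\in\bbr}$. I would first record this distributional identity carefully, writing $\BY := \theta_{-\Delta}\BX - X(\Delta)$ so that $\BY \eqd \BX - X(0)$, and hence $\BY \eqd \BX$ in shape.

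For part (i), I would combine shift compatibility and vertical shift invariance. Fix $\Delta$ and $T$. By shift compatibility (property (3) of Definition~\ref{de:ILF}),
$$
L(\BX, [\Delta, T+\Delta]) = L(\theta_\Delta \BX, [0,T]) + \Delta.
$$
Next, since $L$ is doubly intrinsic, subtracting the constant $X(0)$ from $\theta_\Delta \BX$ does not change the location, so I may replace $\theta_\Delta \BX$ by its vertically shifted, distributionally stationary-increment counterpart. Using the identity $\BY \eqd \BX$ in distribution (as paths in $H$), the random variable $L(\theta_\Delta\BX, [0,T])$ has the same distribution as $L(\BX,[0,T])$. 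Translating this equality of laws into cdfs gives $F_{\BX,[\Delta,T+\Delta]}(x) = F_{\BX,[0,T]}(x-\Delta)$, which is exactly (i). The only care needed is to track the $+\Delta$ and the convention $\infty+\Delta=\infty$ so that the point mass at $\infty$ matches up correctly.

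For parts (ii) and (iii), I would lean on the two monotonicity-type axioms, namely stability under restrictions (property (4)) and consistency of existence (property (5)), which already yield pathwise implications; stationarity of increments is used only through part (i) to reduce to nested intervals sharing an endpoint. For (ii), take any Borel $B\subseteq[c,d]\subseteq[a,b]$. Property (4) says that whenever $L(f,[a,b])\in[c,d]$, one has $L(f,[c,d])=L(f,[a,b])$; hence the event $\{L(\BX,[a,b])\in B\}$ is contained in the event $\{L(\BX,[c,d])\in B\}$, because on the former event the value lands in $[c,d]\supseteq B$ and therefore equals $L(\BX,[c,d])$. Taking probabilities gives $F_{\BX,[a,b]}(B)\le F_{\BX,[c,d]}(B)$. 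Part (iii) is the analogous pathwise inclusion from property (5): $\{L(\BX,[c,d])=\infty\}\supseteq\{L(\BX,[a,b])=\infty\}$ gives the stated inequality on the $\infty$-masses.

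The main obstacle, and the step I would spend the most care on, is the reduction in part (i) that converts the shifted process $\theta_\Delta\BX$ into something distributionally equal to $\BX$ using both the stationary-increment hypothesis and the vertical-shift invariance simultaneously. The subtlety is that stationarity of increments only gives equality of law \emph{after} recentering by the random variable $X(\Delta)$, so it is precisely the doubly-intrinsic property that lets me discard this random vertical shift without affecting $L$; I would want to state explicitly that subtracting a (possibly random) constant from the whole path leaves $L$ unchanged pathwise, and that this is measurable, before passing to distributions. Parts (ii) and (iii), by contrast, are purely pathwise consequences of the restriction and existence axioms and require no probabilistic input beyond monotonicity of probability.
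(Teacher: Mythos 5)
Your proof is correct and follows essentially the same route as the paper: for (i), recenter the shifted process by the random time-constant $X(\Delta)-X(0)$, use stationarity of increments for the distributional identity, use the doubly-intrinsic property to discard the random vertical shift pathwise, and apply shift compatibility to move the interval; parts (ii) and (iii) are, exactly as in the paper, purely pathwise consequences of stability under restrictions and consistency of existence. The only blemishes are notational: with the paper's convention $\theta_c f(x)=f(x+c)$, the process $X(\cdot+\Delta)$ is $\theta_\Delta\BX$ rather than $\theta_{-\Delta}\BX$, and the identity you actually have is $\BY\eqd\BX-X(0)$ rather than $\BY\eqd\BX$, the gap being bridged precisely by the vertical-shift invariance you yourself flag as the key step.
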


\begin{proof}
The point (ii) and (iii) are direct results of the stability under
restriction and the consistency of existence in the definition of
intrinsic location functionals, respectively. For (i), define process $\BY(t):=\BX(t)-\BX(\Delta)+\BX(0),
t\in{\mathbb R}$, then the stationarity of the increments implies
that the process $\BY(\cdot+\Delta)$ has the same distribution as
$\BX(\cdot)$. Thus
$$
F_{\BX,[\Delta, T+\Delta]}(\cdot)=F_{\BY, [0,T]}(\cdot).
$$
Although $\BY(t)-\BX(t)=\BX(0)-\BX(\Delta)$ is random and depends
on the realization, it is a constant over time. Thus
$$
L(\BX,[0,T])=L(\BY,[0,T]),
$$
hence
$$
F_{\BX,[0,T]}(\cdot)=F_{\BY, [0,T]}(\cdot).
$$
\end{proof}

The rest of the proof in the direction $(1)\to(2)$ and $(1)\to(3)$
follows in the same way as in \cite{samorodnitsky:yi:2013b}.

To prove that $(2)\to (1)$, consider the following location
functional:
$$
G_{\bt,\BI}(\BX, [a,a+\Delta]):=\inf\{t\in[a,a+\Delta]: t\in
S(\BX,\bt,\BI)\},
$$
where the random set of points $S$ is defined by
$$
S(\BX,\bt,\BI):=\{t\in\mathbb{R}: \BX(t+t_i)-\BX(t)\in I_i,
\forall i=1,...,n\},
$$
$n$ is a positive integer, $\bt=(t_1,...,t_n)$ such that
$0<t_1<...<t_n$, and $\BI=I_1\times...\times I_n\in\mathcal{I}^n$.
It is then easy to check that such defined $G_{\bt,\BI}$ is a
doubly intrinsic location functional for any $n=1,2,...$, any
$\bt$ and $\BI$. Moreover, $G_{\bt,\BI}(\BX, [a,a+\Delta])=a$ if
and only if
$$
\BX(a+t_i)-\BX(a)\in I_i, \forall i=1,...,n.
$$
If the distribution of $G_{\bt,\BI}$ does not depend on $a$, the
probability that $ \BX(a+t_i)-\BX(a)\in I_i, \forall i=1,...,n. $
can not depend on $a$. Since this shift invariance holds for all
$n, \bt$ and $\BI$, the stationarity of the increments is
guaranteed.

We are now left with the proof that $(3)\to (1)$. The main object
that we are going to consider are the doubly intrinsic location
functionals of the type of Example \ref{e:mollifier}, but slightly
more complicated. More precisely, let the function $\psi$ as
defined in Example \ref{e:mollifier}. Define process $\BY:=\BX *
\psi$, then $\BY$ is a stationary increment process with smooth
path. Consequently, $\BZ=\BY'$, the derivative of $\BY$, is a
smooth stationary process. For any $n=1,2,...$, any $h>0$, $d\geq
0$, any $\bt=(t_0,t_1,...,t_n)$ such that $0<t_0<t_1<...<t_n$ and
any $\BI=I_1\times ... \times I_n \in \mathcal{I}^n$, define the
random set of points
$$
A^{h,d}_{\bt, \BI}(\BX)=\{s\in \mathbb{R}: \BZ(s)=h, \inf\{r>s:
\BZ(r)=h\}> t+d,
$$
$$
\BX(s+t_i)-\BX(s+t_0)\in I_i, \forall i=1,...,n\}.
$$
Notice that the $LI$ setting guarantees the measurability. This
set seems to be a little strange at the first glance, since the
points are marked according to the process $\BZ$, but then
filtered using conditions on the original process $\BX$. However,
since $\BZ$ is transformed from $\BX$ and both the operation of
convolution and differentiation are interchangeable with
translation, the location
$$
L(\BX,I):=\inf\{t: t\in A^{h,d}_{\bt,\BI}(\BX)\cap I\}
$$
is an intrinsic location functional. Moreover, since the points
are marked on the derivative $\BZ$ and then filtered using
conditions only on the increments $\BX(s+t_i)-\BX(s+t_0)$, the
location $L(\BX,I)$ is invariant under vertical shift. Hence
$L(\BX,I)$ is a doubly intrinsic location functional. After
defining
\begin{equation}\label{e:def.p}
p^{h,d}_{\bt,\BI,a,\Delta}(\BX)=\mathbb{P}(A^{h,d}_{\bt,
\BI}(\BX)\cap [a, a+\Delta]\neq \phi),
\end{equation}
we are totally back to the track of the proof for the stationary
case (Theorem \ref{t:equivalence}, proved in
\cite{samorodnitsky:yi:2013b}). Here we list the corresponding
forms that the lemmas should take under the stationarity of
increments.

\begin{lemma}\label{l:in.fix.level} Let $\BX$ be a stochastic process.
If condition (3) in Theorem \ref{t:sta.increment} is satisfied, then for any $h,d,\bt$ and $\BI$ defined as
before, with probability 1, $A^{h,d}_{\bt, \BI}(\BX)$ is either
the empty set or an infinite set, in which case
$\inf(A^{h,d}_{\bt,\BI}(\BX))=-\infty$ and
$\sup(A^{h,d}_{\bt,\BI}(\BX))=\infty$.
\end{lemma}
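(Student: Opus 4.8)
The plan is to extract the distributions of the leftmost and the rightmost points of \(A:=A^{h,d}_{\bt,\BI}(\BX)\) inside an interval from two doubly intrinsic location functionals, and to show that the associated densities are forced to decay like one over the distance to the far endpoint; letting that endpoint run off to infinity then makes the densities of \(\inf A\) and \(\sup A\) vanish on every bounded set. Concretely, besides \(L(\BX,I)=\inf\{t\in I:t\in A\}\) I introduce the rightmost-point functional \(\tilde L(\BX,I):=\sup\{t\in I:t\in A\}\) (set to \(\infty\) when \(A\cap I=\emptyset\)). Since \(A\) is built only from level conditions on the stationary derivative \(\BZ=\BY'\) and from the increments \(\BX(s+t_i)-\BX(s+t_0)\), both functionals are shift compatible and invariant under vertical shifts, so both are doubly intrinsic and condition (3) of Theorem \ref{t:sta.increment} applies to each. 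For \(t\in(a,b)\) one has \(F_{\BX,[a,b]}(t)=\prob(A\cap[a,t]\neq\emptyset)\), which does not involve the right endpoint \(b\); writing \(g_a\) for this density, it is therefore the same for every \(b>t\). Dually, the density \(\tilde g_b\) of \(\tilde L(\BX,[a,b])\) satisfies \(\tilde F_{\BX,[a,b]}(t)=\prob(A\cap(t,b]=\emptyset)\) and does not involve the left endpoint \(a\).

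The key step is to convert the endpoint total variation constraints into monotonicity. Since \(g_a\) does not depend on \(b\), the left-endpoint bound \eqref{e:TV.left}, applied on every interval \([a,b]\), gives \(TV^+_{(a,s)}(g_a)\le g_a(s)\) for all \(s>a\). As \(g_a\) is a sub-probability density, \(\int_a^\infty g_a\le 1\), whence \(\liminf_{s\to\infty}g_a(s)=0\); choosing \(s_n\to\infty\) with \(g_a(s_n)\to 0\) and using \(TV^+_{(a,t)}(g_a)\le TV^+_{(a,s_n)}(g_a)\le g_a(s_n)\) forces \(TV^+_{(a,t)}(g_a)=0\) for every \(t\), i.e. \(g_a\) is nonincreasing on \((a,\infty)\). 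Monotonicity and integrability then give \(g_a(t)(t-a)\le\int_a^t g_a\le 1\), that is \(g_a(t)\le 1/(t-a)\). By absolute continuity there are no atoms in the interior, so for finite \(c<d\) and \(a<c\), \(\prob\bigl(\inf(A\cap[a,\infty))\in[c,d)\bigr)=\int_c^d g_a\le\log\frac{d-a}{c-a}\to 0\) as \(a\to-\infty\); since \(\{\inf A\in[c,d)\}\subseteq\{\inf(A\cap[a,\infty))\in[c,d)\}\) once \(a<c\), this yields \(\prob(\inf A\in[c,d))=0\), hence \(\inf A\in\{-\infty,+\infty\}\) almost surely. The mirror computation using \eqref{e:TV.right} makes \(\tilde g_b\) nondecreasing on \((-\infty,b)\) with \(\tilde g_b(t)\le 1/(b-t)\), and letting \(b\to\infty\) gives \(\sup A\in\{-\infty,+\infty\}\) almost surely. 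On \(\{A\neq\emptyset\}\) both statements hold, so \(\inf A=-\infty\) and \(\sup A=+\infty\), which is exactly the claim.

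The step I expect to demand the most care is the bookkeeping around \(\tilde L\) and the possible non-closedness of \(A\). One must check that \(\tilde L\) satisfies stability under restrictions and consistency of existence at interval endpoints, so that it is genuinely an intrinsic location functional; and, since the increment conditions \(\BX(s+t_i)-\BX(s+t_0)\in I_i\) need not define a closed set when \(\BX\) merely lies in an \(LI\) set, one must verify that \(F_{\BX,[a,b]}(t)\) agrees with \(\prob(A\cap[a,t]\neq\emptyset)\) for all but countably many \(t\), which is all the density argument uses. The remaining items—that the \(LI\) setting supplies the requisite measurability, and that the monotone limits \(\inf(A\cap[a,\infty))\downarrow\inf A\) and \(\sup(A\cap(-\infty,b])\uparrow\sup A\) may be interchanged with the probabilities—are routine.
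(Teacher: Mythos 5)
The paper never actually writes out a proof of this lemma: it is one of the three lemmas the author states and then disposes of by appeal to the corresponding argument for the stationary case in \cite{samorodnitsky:yi:2013b}, adapted to doubly intrinsic location functionals. Your proposal is, in substance, that deferred argument made explicit, and its core mechanism is sound: treat the first and last points of $A:=A^{h,d}_{\bt,\BI}(\BX)$ in an interval as doubly intrinsic location functionals, observe that the first-point density on $(a,b)$ does not depend on $b$ (dually for the last point), so the endpoint constraints \eqref{e:TV.left} and \eqref{e:TV.right} of condition (3) can be applied on arbitrarily long intervals; integrability then kills the positive variation, forces monotonicity and the $1/(t-a)$ decay, and letting $a\to-\infty$ (resp.\ $b\to\infty$) empties every bounded window. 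Modulo routine regularization (the total variation constraints are properties of chosen versions of the densities on each $(a,b)$, so the monotonicity step should be stated for a.e.\ pairs or for a fixed right-continuous version), this limiting argument is correct.

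There is, however, one substantive hole, located exactly at the step you flagged, and your proposed repair does not close it. For condition (3) of Theorem \ref{t:sta.increment} to say anything about $L(\BX,I)=\inf(A\cap I)$ and $\tilde L(\BX,I)=\sup(A\cap I)$, these maps must genuinely satisfy the ILF axioms, and stability under restrictions fails when $A$ is not closed: if $A=\{1/n:n\geq 1\}$, $I_1=[-1,1]$, $I_2=[-1,0]$, then $\inf(A\cap I_1)=0\in I_2$ while $\inf(A\cap I_2)=\infty$. Your fallback --- identifying the cdf with $\prob(A\cap[a,t]\neq\emptyset)$ up to countably many $t$ --- repairs the distributional identity but not this pathwise axiom, and without the axiom condition (3) cannot be invoked at all. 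The correct repair is different and easy: when $d>0$, any two points of $A$ are more than $d$ apart (if $s_1<s_2\leq s_1+d$ both lie in $A$, then $s_2$ is a level-$h$ point of $\BZ$ in $(s_1,s_1+d]$, contradicting the separation requirement in the definition of $A$), so $A$ is uniformly discrete, hence closed; infima and suprema over compact intervals are then attained and all the axioms hold. Since the lemma as stated allows $d=0$, where this argument breaks down, one should add the reduction $A^{h,0}_{\bt,\BI}=\bigcup_n A^{h,1/n}_{\bt,\BI}$ to deduce that case from the $d>0$ cases. Finally, a small slip: $\tilde F_{\BX,[a,b]}(t)=\prob(A\cap(t,b]=\emptyset)$ is not the cdf of $\tilde L$, since it includes the event $A\cap[a,b]=\emptyset$ on which $\tilde L=\infty$; this only shifts the function by a constant in $t$-increments and is harmless for the density argument, but it should be corrected.
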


\begin{lemma}\label{l:in.points.sta}
Given $h\in\mathbb{R}$, if for any $\Delta >0$,$d\geq 2\Delta$,
$\bt$ and $\BI$ defined as before,
$p^{h,d}_{\bt,\BI,a,\Delta}(\BX)$ is always constant on $a$, then
the process $\BX$ is of stationary increments.
\end{lemma}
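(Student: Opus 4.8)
The plan is to deduce the stationarity of the increments of $\BX$ by recovering the shift-invariance of all finite-dimensional distributions of its increments, reading them off directly from the increment data already carried by the marks of $A^{h,d}_{\bt,\BI}(\BX)$. The key structural remark is that the smoothed derivative $\BZ=(\BX*\psi)'$ enters only as a device for isolating anchor points, whereas the conditions $\BX(s+t_i)-\BX(s+t_0)\in I_i$ record genuine increments of $\BX$ itself; hence no limit in the mollifier is required, and it suffices to extract the joint law of $(\BX(a+u_j)-\BX(a))_j$ from the constancy hypothesis for all choices of lags.

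First I would exploit the spacing condition. Since $d\ge 2\Delta$, two anchor points inside a common window of length $\Delta$ would be forced to lie more than $2\Delta$ apart, which is impossible; thus $A^{h,d}_{\bt,\BI}(\BX)\cap[a,a+\Delta]$ contains at most one point. Consequently, for fixed $h,d,\bt,\Delta$ the map $\BI\mapsto p^{h,d}_{\bt,\BI,a,\Delta}(\BX)$ is, on boxes, a finite measure on $\bbr^n$: the law of the increment mark of that unique anchor, weighted by the probability that the anchor exists. The hypothesis makes this measure independent of $a$; taking $\BI=\bbr^n$ shows that the occupation probability $q:=\mathbb{P}(A^{h,d}_{\bt,\bbr^n}(\BX)\cap[a,a+\Delta]\neq\emptyset)$ is likewise $a$-free, so the conditional (Palm) law of the increment mark given an anchor in the window is $a$-invariant. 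Letting $\bt,\BI$ range over all admissible choices upgrades this to the statement that the first-moment (intensity) measure of the marked point process tagging each level-$h$ anchor $s$ of $\BZ$ with the increment profile $(\BX(s+u)-\BX(s))_u$ is translation-invariant in the time coordinate.

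Next I would decondition the marking. Fixing $d$ and letting $\Delta\downarrow 0$, so that $p^{h,d}/\Delta$ tends to the instantaneous rate of anchors at time $a$, and then integrating the level over $h\in\bbr$ (using that the constancy is available at every level), the single-level crossing counts combine through the area/Kac--Rice formula for the $C^1$ process $\BZ$ into a time integral. The upshot is that $\mathbb{E}[\,|\BZ'(a)|\,\one(\mathrm{gap}_a>d)\,g(\BX(a+u_1)-\BX(a),\dots,\BX(a+u_m)-\BX(a))\,]$ is independent of $a$ for every bounded $g$ and every finite set of lags. Sending $d\downarrow 0$ removes the gap restriction, since it is automatic at regular crossing points, and leaves the $|\BZ'(a)|$-weighted increment law of $\BX$ at time $a$ shift-invariant.

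The main obstacle is this last deconditioning: stripping off the weight $|\BZ'(a)|$ to reach the plain increment law. The observation I would lean on is that $\BZ'(a)=(\BX*\psi'')(a)$ is itself a linear functional of the increments of $\BX$, so the weight can be folded into the mark $g$; a monotone-class/approximation argument applied to the weighted identities for a sufficiently rich family of $g$'s should then recover that $\mathbb{E}[g(\BX(a+u_1)-\BX(a),\dots)]$ is $a$-independent, with the null set $\{\BZ'(a)=0\}$ handled separately. Carrying this out rigorously, together with the justification of the exchange of expectation and level-integration in the area-formula step for the smooth marking process, is where the real work lies; once it is in place, the shift-invariance of every increment finite-dimensional distribution yields that $\BX$ is of stationary increments.
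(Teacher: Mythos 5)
Your opening steps are sound and coincide with the natural start of the argument the paper has in mind (the paper itself does not prove Lemma \ref{l:in.points.sta} in-text; it defers to the stationary-case proof in \cite{samorodnitsky:yi:2013b}): the gap condition $d\geq 2\Delta$ forces at most one anchor per window, so $\BI\mapsto p^{h,d}_{\bt,\BI,a,\Delta}(\BX)$ extends to an $a$-independent sub-probability measure on the increment marks. But from there your route departs in a way that immediately exceeds the lemma's hypothesis: the area/Kac--Rice step integrates the constancy over all levels $h\in\bbr$, whereas Lemma \ref{l:in.points.sta} fixes a single $h$. This is not cosmetic. The paper's architecture (Lemma \ref{l:in.fix.level} ``gives us the right to decompose the path space and focus on only one given $h$'') is built so that the passage from anchored increment data to deterministic-time increment laws is done level by level, using only occupation probabilities $p\leq 1$ of the unique anchor in shrinking windows --- approximating the event $\{\BX(a+u_j)-\BX(a)\in B_j,\ \forall j\}$ by countably many events ``an anchor lies in a window of width $\vep$ with marks in slightly enlarged boxes'' and letting $\vep\to 0$. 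No expected crossing counts, intensities, or co-area identities appear, precisely because none of those are guaranteed to be finite.

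The genuine gap is the deconditioning step you yourself flag as ``where the real work lies'': it cannot be carried out as proposed. First, integrability: $\BX$ is only assumed to lie in a LI set, so $\mathbb{E}|\BZ'(a)|$ --- and hence every Kac--Rice expectation you write --- may equal $+\infty$, in which case the asserted $a$-independence of $\mathbb{E}\bigl[|\BZ'(a)|\,\one(\mathrm{gap}_a>d)\,g(\cdot)\bigr]$ reads $\infty=\infty$ and carries no information, and the dominated-convergence passage $d\downarrow 0$ fails. Second, $\BZ'(a)=(\BX*\psi'')(a)=\int\bigl(\BX(a+v)-\BX(a)\bigr)\psi''(-v)\,dv$ is a functional of the \emph{continuum} of increments, while the marks in $A^{h,d}_{\bt,\BI}$ constrain only finitely many; folding the weight into $g$ therefore requires approximating this integral by finitely many increments (Riemann sums), which for merely locally integrable, possibly nowhere continuous paths need not converge pointwise --- exactly the continuity that the LI setting withholds. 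Third, $\{\BZ'(a)=0\}$ is not a null set: it can even have full probability (e.g.\ on paths whose mollification is affine), so your scheme recovers at best shift invariance of $\mathbb{E}\bigl[g(\cdot)\one(\BZ'(a)\neq 0)\bigr]$, leaving the complementary event entirely uncontrolled. That uncontrolled degenerate regime is precisely what the paper absorbs through the fixed-$h$ formulation together with the path-space decomposition of Lemma \ref{l:in.fix.level}; an argument that tries to quotient out the weight $|\BZ'(a)|$ cannot reach the stated conclusion.
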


\begin{lemma}\label{l:in.pconst}
Assume that for any doubly intrinsic location functional
$L\in\mathcal{D}$, any interval $I\in\mathcal{I}$, $L(\BX, I)$
admits a density function $f_{\BX,I}(t)$ in $\mathring{I}$, which
satisfies the total variation constraints on $I$. Then
$p^{h,d}_{\bt,\BI,a,\Delta}(\BX)$ is constant on $a$ for any
$\Delta >0$, $d\geq 2\Delta$, $\bt$ and $\BI$ defined as before.
\end{lemma}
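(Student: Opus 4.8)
The plan is to translate the statement into a property of the intensity (first‑moment) measure of the random set $A:=A^{h,d}_{\bt,\BI}(\BX)$, and then to feed the total variation constraints into the argument of \cite{samorodnitsky:yi:2013b} essentially unchanged. First I would record the separation built into $A$: by construction, if $s\in A$ then the next $h$‑crossing of $\BZ$ strictly after $s$ occurs beyond $s+d$, and since any other point of $A$ is in particular such an $h$‑crossing, consecutive points of $A$ are more than $d\ge 2\Delta$ apart. Hence every interval of length at most $2\Delta$ contains at most one point of $A$. Writing $L(\BX,I)=\inf\{t:t\in A\cap I\}$ for the doubly intrinsic functional already introduced, this gives, for $I=[a,a+\Delta]$,
$$
p^{h,d}_{\bt,\BI,a,\Delta}(\BX)=\mathbb{P}\bigl(A\cap[a,a+\Delta]\neq\emptyset\bigr)=\mathbb{P}\bigl(L(\BX,[a,a+\Delta])\neq\infty\bigr)=1-F_{\BX,[a,a+\Delta]}(\{\infty\}),
$$
so the claim is exactly the $a$‑independence of $F_{\BX,[a,a+\Delta]}(\{\infty\})$.

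Next I would identify the assumed interior density with the intensity density of $A$. Since $[a,a+\Delta]$ carries at most one point of $A$, and that point is automatically the leftmost, for every subinterval $(c,d)\subseteq(a,a+\Delta)$ one has $\{L(\BX,[a,a+\Delta])\in(c,d)\}=\{A\cap(c,d)\neq\emptyset\}$; as the count is $0$ or $1$, the intensity measure $\mu_A$ satisfies $\mu_A((c,d))=\mathbb{P}(A\cap(c,d)\neq\emptyset)=\int_c^d f_{\BX,[a,a+\Delta]}(t)\,dt$. Because every real point is interior to some admissible window, $\mu_A$ is absolutely continuous on $\bbr$, with a \cadlag\ density $\lambda$ agreeing with $f_{\BX,[a,a+\Delta]}$ on $(a,a+\Delta)$; in particular $\mu_A$ has no atoms, $F_{\BX,[a,a+\Delta]}$ places no mass at the endpoints, and
$$
p^{h,d}_{\bt,\BI,a,\Delta}(\BX)=\int_a^{a+\Delta}\lambda(t)\,dt .
$$
The total variation constraints (\ref{e:TV.away})--(\ref{e:TV.right1}) thus hold for $\lambda$ on every window of every length up to $d/2$, and the lemma reduces to showing that $\int_a^{a+\Delta}\lambda$ does not depend on $a$.

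This last reduction is the analytic heart of the matter, and it is where I expect the real work to lie. The subtlety is that the constraints attached to the single functional $L$ above cannot by themselves force the conclusion: they merely bound the variation of $\lambda$, and a non‑constant $\lambda$ can respect all of them while making $\int_a^{a+\Delta}\lambda(t)\,dt$ depend on $a$. One must therefore use hypothesis (3) for the entire family $\mathcal{D}$. Here I would transcribe the corresponding step of \cite{samorodnitsky:yi:2013b} for the stationary case, where the total variation constraints, imposed simultaneously on all doubly intrinsic location functionals, rule out any $a$‑dependence of the window mass; the mechanism extracts constancy from the joint constraints rather than from any single one. The only genuine changes are bookkeeping: the points previously marked by level crossings of the stationary process are now marked by the $h$‑crossings of $\BZ=(\BX*\psi)'$ and then filtered through the increment conditions $\BX(s+t_i)-\BX(s+t_0)\in I_i$.

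The main obstacle is precisely this faithful transcription. Two points need care. First, each auxiliary location used in the 2013b argument must be re‑verified to be doubly intrinsic in the present increment‑based construction (marking on $\BZ$, filtering on increments of $\BX$), and its measurability must be secured through the $LI$ structure of $H$ via the map (\ref{e:the.map}) and Fubini's theorem, exactly as in Example \ref{e:mollifier}. Second, one must confirm that the total variation constraints are genuinely available for the whole family, so that the combinatorial mechanism of \cite{samorodnitsky:yi:2013b} applies verbatim; once it does, the $a$‑independence of $\int_a^{a+\Delta}\lambda$, and hence of $p^{h,d}_{\bt,\BI,a,\Delta}(\BX)$, follows.
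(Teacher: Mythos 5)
Your proposal takes essentially the same route as the paper: the paper gives no self-contained proof of this lemma, but presents it as the stationary-increments analogue of the corresponding lemma in \cite{samorodnitsky:yi:2013b} and defers the argument there, which is precisely what you do after reducing $p^{h,d}_{\bt,\BI,a,\Delta}(\BX)$ to the window mass $\int_a^{a+\Delta}\lambda(t)\,dt$ of the intensity of $A^{h,d}_{\bt,\BI}(\BX)$. Your preliminary steps (separation of the points of $A^{h,d}_{\bt,\BI}(\BX)$ by more than $d\ge 2\Delta$, the patching of the assumed interior densities into a single intensity density $\lambda$, and the recognition that the constraints must be invoked for the whole family $\mathcal{D}$ rather than for one functional) are correct and consistent with, indeed more explicit than, what the paper records.
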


Lemma \ref{l:in.fix.level} gives us the right to decompose the
path space and focus on only one given $h$. Lemma
\ref{l:in.pconst} and Lemma \ref{l:in.points.sta} then lead to the
desired result in a straightforward way.

\section{Definition of local intrinsic location functional and representation by ordered set}

The results reviewed in Section 1 showed how closely the concept of
intrinsic location functional is related to stationarity. In some
sense, the total variation constraints for intrinsic location
functionals are just stationarity itself viewed from a different
perspective. However, if one only considers the total variation
constraint for intervals with a particular length, condition (4)
and (5) in Definition \ref{de:ILF} may
appear unnecessarily restrictive: in order to get the total
variation constraint for the intervals with this length, one needs
to introduce the relationships between intervals with all different
lengths. Therefore it is interesting to check if we can adjust
the definition of intrinsic location functional, so that it can be defined only for intervals with the given length, while assuring
that the total variation constraints still hold for the intervals with this
length. It turns out that a reasonable way for this purpose
is to define the following object, which we name as ``local intrinsic
location functional''.

\begin{defn}\label{defn:lilf}
Let $T>0$ be given. A mapping $L_T: H\times \mathbb{R}\to \mathbb{R}\cup\{\infty\}$
is called a local intrinsic location functional with related length $T$, if it satisfies the
following conditions.
\begin{enumerate}
\begin{item}
For every $a\in {\mathbb R}$, the map $L_T(\cdot, a):\, H\to \bbr \cup\{\infty\}$
is measurable.
\end{item}
\begin{item}
For every $f\in H$ and $a\in\mathbb{R},\,  L_T(f,a)\in
[a,a+T]\cup\{\infty\}$.
\end{item}
\begin{item}
For every $f\in H$, $a\in\mathbb{R}$ and
$c\in\mathbb{R}$,
$$
L_T(f,a)=L_T(\theta_c f, a-c)+c,
$$
where $\infty+c=\infty$.
\end{item}

\begin{item}
For every $f\in H$ and $a,b\in {\mathbb R}$,
$L_T(f,a)\in[b,b+T]$ implies that either
$L_T(f,b)=L_T(f,a)$, or $L_T(f,b)\in[b,b+T]\backslash[a,a+T]$.
\end{item}
\end{enumerate}
\end{defn}

The first three conditions are the same as in
the definition of intrinsic location functional. The condition (4)
is new and replaces both condition (4) and (5) in Definition
\ref{de:ILF}. Intuitively, it first requires that if the locations
for two intervals with the same length both fall into the
intersection of these two intervals, then they must agree. This is
a counterpart of condition (4) (stability under restriction) in
Definition \ref{de:ILF}, but now only
explicitly involving intervals with one fixed length. The second
possibility in condition (4) says that if the location for the
first interval is located in the second interval yet is no longer
the corresponding location for the second interval, then it must
be replaced by another point which is located in the second
interval but outside the first interval. In particular, the
corresponding location for the second interval can not take value
$\infty$. In this sense, the second part of condition (4) actually
serves as an alternative of condition (5) (consistency of
existence) in Definition \ref{de:ILF}.

It is not difficult to see that if we restrict the definition of
an intrinsic location functional to intervals with a fixed length,
then it automatically gives out a local intrinsic location
functional:

\begin{example}{\rm
Let $L: H\times {\mathcal I}\to {\mathbb R}\cup\{\infty\}$ be an intrinsic location functional. Then it is easy to check that for any fixed length $T>0$, $L_T$ defined by
$$
L_T(f,a)=L(f,[a,a+T]),
$$
$f\in H, a\in{\mathbb R}$ is a local intrinsic location
functional.}
\end{example}

On the other hand, a natural ``extension'' of a
local intrinsic location functional to intervals with different
lengths does not necessarily give out an intrinsic location
functional, as shown by the following example.

\begin{example}{\rm
Let $H=\mathcal C(\mathbb R)$, $l>0$, $L_T(f,a)$ be the first hitting
time to a fixed level $h$ in the interval $[a,a+T]$, provided that
its distance to the left end point of the interval is at most $l$.
That is,
$$
L_T(f,a)=\inf\{t\in[a,a+T]: f(t)=h, t\leq a+l\}.
$$
Then $L_T$ is a local intrinsic location functional. However, its
``natural'' extension, $L(f,[a,b]):=\inf\{t\in[a,b]: f(t)=h, t\leq
a+l\}$ is not an intrinsic location functional. To see this,
notice that the existence of such a location in an interval with
length $T$ does not guarantee its existence for all the larger
intervals containing it, since the location may fail to remain
close enough to the left end point when the interval expands.}
\end{example}

It turns out that despite the large variety covered by the concept
of local intrinsic location functional, they all correspond to the
idea of taking the maximal element in a random set, ordered
according to some specific rule.

\begin{theorem}\label{thm:order.representation2}
Let $H$ be defined as before. A mapping $L_T=L_T(f,a)$ from
$H\times \mathbb R$ to ${\mathbb R}\cup\{\infty\}$ is a local
intrinsic location functional with related length $T$, if and only
if
\begin{enumerate}
\begin{item}
$L_T(\cdot,a)$ is measurable for $a\in \mathbb R$;
\end{item}
\begin{item}
For each function $f\in H$, there exists a subset of $\mathbb{R}$
denoted as $S(f)$ and a partial order $\preceq$ on it, satisfying:
\begin{enumerate}
\item For any $c\in\mathbb R$, $S(f)=S(\theta_c f)+c$; \item For
any $c\in\mathbb R$ and any $t_1,t_2\in S(f)$, $t_1\preceq t_2$
implies $t_1-c\preceq t_2-c$ in $S(\theta_c f)$,
\end{enumerate}
such that for any $a\in {\mathbb R}$, either $S(f)\cap
[a,a+T]=\phi$, in which case $L_T(f,a)=\infty$, or $L_T(f,a)$ is the maximal element in
$S(f)\cap [a,a+T]$ according to $\preceq$.
\end{item}
\end{enumerate}
\end{theorem}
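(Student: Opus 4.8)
The plan is to prove the two implications separately, starting with the representation-to-functional direction since it is the routine one. Assuming (1) and (2), I interpret ``the maximal element'' as the greatest element of $S(f)\cap[a,a+T]$ (which must be unique for $L_T$ to be single-valued), and verify the four axioms of Definition \ref{defn:lilf}. Measurability is hypothesis (1); the containment $L_T(f,a)\in[a,a+T]\cup\{\infty\}$ is immediate from the two cases; shift compatibility follows because property (2a) gives $S(\theta_c f)\cap[a-c,a-c+T]=(S(f)\cap[a,a+T])-c$ and property (2b) transports the greatest element, so the maximum shifts by $-c$. For the new axiom (4), suppose $L_T(f,a)=m\in[b,b+T]$ while $L_T(f,b)=m'\neq m$. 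Then $m$ is greatest in $S(f)\cap[a,a+T]$ and $m'$ is greatest in $S(f)\cap[b,b+T]$; if $m'\in[a,a+T]$ then $m'\preceq m$ and $m\preceq m'$, forcing $m=m'$ by antisymmetry, a contradiction. Hence $m'\in[b,b+T]\setminus[a,a+T]$, which is exactly the second alternative.

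For the converse, given $L_T$ I would set $S(f)=\{L_T(f,a):a\in\mathbb R,\ L_T(f,a)\neq\infty\}$ and declare $s\to t$ when $s\neq t$ and some window $[a,a+T]$ containing both satisfies $L_T(f,a)=t$; then let $\preceq$ be the reflexive--transitive closure of $\to$. Several facts reduce the theorem to one key property. First, axiom (4) gives existence propagation: if $t\in S(f)\cap[a,a+T]$, applying (4) to the window witnessing $t$ and to $a$ forces $L_T(f,a)\in\mathbb R$, so $L_T(f,a)=\infty$ iff $S(f)\cap[a,a+T]=\phi$. Second, the same use of (4) rules out two-cycles: one cannot have both $s\to t$ and $t\to s$, since that would place the selection of one window strictly inside the other window yet differing from its selection, contradicting both alternatives of (4). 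Third, whenever $L_T(f,a)=t$, every other point $u\in S(f)\cap[a,a+T]$ satisfies $u\to t$; thus, once $\preceq$ is a partial order, $L_T(f,a)$ is automatically the greatest element of the window. Properties (2a) and (2b) are inherited directly from axiom (3), since the defining windows translate accordingly, so $\to$ and hence $\preceq$ commute with $\theta_c$.

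Everything therefore reduces to the main obstacle: showing that $\to$ is \emph{acyclic}, so that $\preceq$ is antisymmetric. This is unavoidable, because the representation forces $t\succ u$ for every $u$ in the window selecting $t$, and these relations must be mutually consistent. My plan is to argue by a minimal-length cycle $r_0\to r_1\to\cdots\to r_{m-1}\to r_0$. The engine is a consequence of (4): applying it to the window witnessing $r_i\to r_{i+1}$ together with the one witnessing $r_{i+1}\to r_{i+2}$ gives $r_{i+2}\notin W_i$, so $r_{i+2}$ does not lie in the closed interval spanned by $r_i$ and $r_{i+1}$---the point two steps ahead never lies between the current consecutive pair. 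For $m=3$ this is already contradictory, since among three distinct reals the median lies between the other two, settling the base case. For the inductive step I would seek a chord: whenever two cycle vertices $r_i,r_j$ lie within distance $T$, they share a window whose selection yields an edge $r_i\to r_j$ or $r_j\to r_i$, shortening the cycle and contradicting minimality.

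The hardest part will be the case in which such a common window selects a point \emph{outside} the cycle, so that no immediate chord appears. Handling it will require the finer description of how $L_T(f,a)$ moves as the window slides---namely that each value $t\in S(f)$ is selected on a single interval $W(t)\subseteq[t-T,t]$ of window positions (itself a short consequence of (4)), and that transitions occur only when the current point exits on the left or a strictly preferred point enters on the right. I expect that tracking these transitions across the cluster of cycle points, combined with the betweenness constraints above, forces one cycle vertex to lie between two others in a way already excluded, thereby ruling out cycles of every length. Making this last step fully rigorous is the crux of the argument.
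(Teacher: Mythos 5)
Your architecture coincides with the paper's own proof: the same set $S(f)=\{t:t=L_T(f,a)\text{ for some }a\}$, the same generating relation (the paper's $\preceq_0$), its transitive closure, and the reduction of everything to acyclicity. Your ``if'' direction, the existence propagation, the exclusion of two-cycles, and the $m=3$ base case (via the betweenness consequence of axiom (4) of Definition \ref{defn:lilf}) are all correct and parallel the paper. But the proposal has a genuine gap exactly where you flag ``the crux'': the inductive step needs the claim that two chain-related points within distance $T$ carry a \emph{direct} edge, and this is never proved. It cannot be waved through, because two points within distance $T$ share many windows, yet every one of those windows may select a third point, so no chord between the two cycle vertices is produced; your closing suggestion that ``tracking transitions forces one cycle vertex to lie between two others'' is a hope, not an argument, and the actual resolution does not take that form. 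This missing claim is precisely the paper's Lemma \ref{lemma:direct}, whose proof is the real content of the theorem.

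For comparison, here is how the paper closes the hole. Suppose $t_1<t_2$, $t_2-t_1\leq T$, $t_1$ and $t_2$ are joined by a $\preceq_0$-chain $t_1=s_0\preceq_0 s_1\preceq_0\cdots\preceq_0 s_{n+1}=t_2$ but by no direct relation. Take $a$ with $L_T(f,a)=t_1$; then $t_2\notin[a,a+T]$ (else $t_2\preceq_0 t_1$), so $a+T<t_2$, and applying axiom (4) to the windows $[a,a+T]$ and $[t_2-T,t_2]$ forces $t_3:=L_T(f,t_2-T)$ to lie in $(t_1,t_2)$ with $t_1\preceq_0 t_3$ and $t_2\preceq_0 t_3$. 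Note that $t_3$ is a genuinely new point, in general outside any cycle, which is why a betweenness argument confined to cycle vertices cannot reach it. The contradiction then comes from the chain itself: the intervals $[s_j,s_{j+1})$ with $s_j<s_{j+1}$ cover $[t_1,t_2)$, hence cover $t_3$, say $t_3\in[s_k,s_{k+1})$; whether $s_{k+1}\leq t_2$ or $s_{k+1}>t_2$, one exhibits two windows whose intersection contains two distinct points selected differently, violating axiom (4). Only with this lemma in hand does the shortening step (the paper uses the rightmost vertex of the loop, whose two neighbours are automatically within distance $T$ of each other) reduce an arbitrary loop to a three-term loop, which your base-case argument then kills. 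So your proposal is a correct skeleton with the load-bearing lemma absent; completing it requires an argument of the above type.
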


\begin{proof}
It is easy to check that the measurability of $L_T(\cdot,a)$ for $a\in\mathbb R$ and the existence of such an ordered set $S(f)$ for $f\in H$ guarantee that $L_T$ is a local intrinsic location functional. For the other direction, let $L_T$ be a local intrinsic location functional with related length $T$. For each path $f$, define a set
$$
S(f)=\{t\in {\mathbb R}: t=L_T(f,a) \text{ for some } a\in{\mathbb R}\}.
$$
Thus $S(f)$ is the set of all the points which is chosen as the
location for some interval with length $T$. From now on we fix the
function $f$ and simplify the notation $S(f)$ as $S$. We introduce
the following partial binary relation on $S$. For two points
$x,y\in S$, say $x\preceq_0 y$ if and only if there exists an
interval $I_{x,y}=[a_{x,y},a_{x,y}+T]$, such that $x,y\in I_{x,y}$
and $L_T(f,a_{x,y})=y$. In another word, $x\preceq_0 y$ if and
only if some interval with length $T$ containing both of them
``chooses'' $y$ rather than $x$ to be its corresponding location.
Then we complete $\preceq_0$ by taking the smallest transitive
binary relation containing it, denoted as $\preceq$. We claim that
such defined $\preceq$ is actually a partial order on the set $S$.

The reflexivity is clear: by definition, $x\preceq x, \forall x\in
S$. The transitivity is also guaranteed by construction. Therefore
the only thing left is to check the antisymmetry: if $x\preceq y$
and $y\preceq x$, then $x=y$. To this end, firstly notice that the
construction of the binary relation $\preceq_0$ guarantees that it
is always antisymmetric before being extended to $\preceq$. That
is, $x\preceq_0 y$ and $y\preceq_0 x$ implies $x=y$. Now assume
$x\neq y$, $x\preceq y$ and $y\preceq x$, then there is a loop:
$x=t_0\preceq_0 t_1\preceq_0 ... \preceq_0 y=t_n\preceq_0
t_{n+1}\preceq_0 ...\preceq_0 t_{n+m-1}\preceq_0 t_{n+m}=x$ for
some positive integers $m,n$, and points
$t_0,t_1,...,t_{n+m-1},t_{n+m}=t_0$ satisfying $|t_{i+1}-t_i|\leq
T$ for any $i=0,...,n+m-1$.

To deal with this loop, notice that we have the proposition below,
which states that if two points within a distance no larger than $T$ have a relation $\preceq$ between them, then there must be
a direct relation given by $\preceq_0$. They can not be only
related through a chain of ``$\preceq_0$'' via other points.

\begin{lemma}\label{lemma:direct}
Let the relations $\preceq_0$ and $\preceq$ be as defined above.
Then $t_1\preceq t_2$ and $|t_2-t_1|\leq T$ imply $t_1\preceq_0
t_2$ or $t_2\preceq_0 t_1$.
\end{lemma}

\begin{proof}
Proof by contradiction. Without loss of generality, assume there
are two points $t_1, t_2\in S$, $t_1<t_2$, $t_2-t_1\leq T$, there
exist points $s_0,s_1,...,s_n,s_{n+1}$ such that $s_0=t_1\preceq_0
s_1\preceq_0 ... \preceq_0 s_n\preceq_0 s_{n+1}=t_2$, however,
there is no direct relation given by $\preceq_0$ between $t_1$ and
$t_2$. That is, every interval with length $T$ containing the
interval $[t_1, t_2]$ have neither $t_1$ nor $t_2$ as its
corresponding location. Since $t_1\in S$, there is $a\in{\mathbb
R}$, such that $L_T(f,a)=t_1$. The interval $[a,a+T]$ can not
include $t_2$, otherwise $t_2\preceq_0 t_1$. Therefore $a+T<t_2$.
Consider $L_T(f,t_2-T)$. Because
$L_T(f,a)=t_1\in[a,a+T]\cap[t_2-T,t_2]$, the condition (4) in the
definition of local intrinsic location functional rules out the
possibility that $L_T(f,t_2-T)=\infty$ or $L_T(f,t_2-T)\in
[a,a+T]\cap[t_2-T,t_2]$. Thus $L_T(f,t_2-T)\in (a+T,t_2]\subseteq
(t_1, t_2]$. It can not be $t_2$ either since then $t_1\preceq_0
t_2$. As a result, $L_T(f,t_2-T)\in(t_1, t_2)$. Denote
$L_T(f,t_2-T)$ by $t_3$. Then $t_3\in S$ and by definition
$t_1\preceq_0 t_3$ and $t_2\preceq_0 t_3$.

Consider the intervals $[s_j,s_{j+1})$ for $j=0,...,n$ which
satisfies $s_j<s_{j+1}$. Clearly, their union covers the interval
$[t_1,t_2)$, therefore also the point $t_3$. Assume
$t_3\in[s_k,s_{k+1})$. There are two cases. Case 1: $s_{k+1}\leq
t_2$. Since $s_k\preceq_0 s_{k+1}$, there is a real number $a_1$,
such that $s_k\in[a_1,a_1+T]$ and $L_T(f,a_1)=s_{k+1}$. Similarly,
since $t_2\preceq_0 t_3$, there is a real number $a_2$ such that
$t_2\in[a_2,a_2+T]$ and $L_T(f,a_2)=t_3$. However $s_{k+1}\leq
t_2$ implies that both $s_{k+1}$ and $t_3$ are in the interval
$[a_1,a_1+T]\cap[a_2,a_2+T]$, thus contradicting with the
definition of local intrinsic location functional. Case 2:
$s_{k+1}>t_2$. In this case, notice that $t_2\in S$, so there
exists $a_3$ such that $L_T(f,a_3)=t_2$. However, since
$\preceq_0$ is antisymmetric, $t_2\preceq_0 t_3$ implies that
$t_3\npreceq_0 t_2$, so $a_3>t_3$. Now both $t_2$ and $s_{k+1}$
are in the interval $[a_1,a_1+T]\cap[a_3, a_3+T]$, yet $L_T$ gives
out different locations, contradiction again. To conclude, the
assumption at the beginning of the proof can not hold, and the
lemma is proved.
\end{proof}

Now we turn back to the loop and prove the following result: there
exist $i_1, i_2, i_3\in\{0,...,n+m-1\}$, such that
$t_{i_1}\preceq_0 t_{i_2}\preceq_0 t_{i_3}\preceq_0 t_{i_1}$.
Consider the rightmost point in the set
$\{t_i\}_{i=0,...,n+m-1}$, denoted as $t_j:=\max_{i=0}^{n+m-1}
t_i$. Notice that $t_{j-1}<t_j$, $t_{j+1}<t_j$, therefore
$|t_{j+1}-t_{j-1}|<T$, and $t_{j-1}\preceq_0 t_j\preceq_0 t_{j+1}$
(define ${t_{-1}}=t_{n+m-1}$). By lemma \ref{lemma:direct} there
is a relation $\preceq_0$ between $t_{j+1}$ and $t_{j-1}$. If
$t_{j+1}\preceq_0 t_{j-1}$, we already have a loop with three
terms as desired. If $t_{j-1}\preceq_0 t_{j+1}$, then consider the
set $\{t_i\}_{i=0,...,n+m-1, i\neq j}$. It is also a loop as the
set $\{t_i\}_{i=0,...,n+m-1}$ by which we started, now with one
less term. An iteration of this procedure finally decreases the
size of the set to 3, so we find a loop with 3 terms again.

The existence of a loop with 3 terms, however, contradicts with
the definition of the relation $\preceq_0$. To see this, without
loss of generality, suppose that we have $t_1<t_2<t_3$ satisfying
$t_1\preceq_0 t_2\preceq_0 t_3\preceq_0 t_1$.
This means that there exists $a,b\in{\mathbb R}$, such that
$t_1,t_3\in [a,a+T]$ and $L_T(f,a)=t_1$, $t_1,t_2\in[b,b+T]$ and
$L_T(f,b)=t_2$. However, the fact that $t_1,t_2\in
[a,a+T]\cap[b,b+T]$, yet $L_T(f,a)$ and $L_T(f,b)$ are not equal
contradicts with the definition of local intrinsic location
functional.

In total, we have seen that a loop of relation $\preceq_0$,
therefore also $\preceq$, is not possible. Thus the antisymmetry
is proved. The relation $\preceq$ is a partial order. Finally, it
is clear by the construction of the partially ordered set
$(S(f),\preceq)$ that either $S(f)\cap [a,a+T]=\phi$, in which
case $L_T(f,a)=\infty$, or $L_T(f,a)\in S(f)\cap[a,a+T]$, in which
case $s\preceq L_T(f,a)$ for all $s\in S(f)\cap[a,a+T]$.
\end{proof}

\begin{remark}\label{r:generalization}{\rm
The partial order in the theorem has the special property that
there exists a unique maximal element over any interval with
length $T$. In this sense it behaves like a total order. Indeed,
by order extension principle, the partial order $\preceq$ can
always be extended to a total order on $S(f)$, and it is clear
that we can do it in a shift-invariant way, so that the resulting
total order also satisfies the conditions in Theorem
\ref{thm:order.representation2}. Nonetheless, here we would like
to keep $\preceq$ a partial order for generality.}
\end{remark}

A similar reasoning allows us to derive the ordered set representation for intrinsic location functionals.

\begin{corollary}\label{cor:order.representation}
Let $H$, $\mathcal I$ be defined as before. A mapping $L=L(f,I)$
from $H\times \mathcal I$ to ${\mathbb R}\cup\{\infty\}$ is an
intrinsic location functional if and only if
\begin{enumerate}
\begin{item}
$L(\cdot,I)$ is measurable for $I\in \mathcal I$;
\end{item}
\begin{item}
For each function $f\in H$, there exists a partially ordered
subset of ${\mathbb R}$, denoted as $(S(f),\preceq_1)$,
satisfying:
\begin{enumerate}
\item For any $c\in\mathbb R$, $S(f)=S(\theta_c f)+c$; \item For
any $c\in\mathbb R$ and any $t_1,t_2\in S(f)$, $t_1\preceq_1 t_2$
implies $t_1-c\preceq_1 t_2-c$ in $S(\theta_c f)$,
\end{enumerate}
such that for any $I\in {\mathcal I}$, either $S(f)\cap I=\phi$,
in which case $L(f,I)=\infty$, or $L(f,I)$ is the maximal element in $S(f)\cap I$ according to $\preceq_1$.
\end{item}
\end{enumerate}
\end{corollary}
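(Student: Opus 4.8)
The plan is to mirror the proof of Theorem~\ref{thm:order.representation2}, but to exploit the crucial extra freedom that an intrinsic location functional is defined for intervals of \emph{every} length; this makes the argument substantially shorter than in the fixed-length case. For the ``if'' direction I would simply verify the five axioms of Definition~\ref{de:ILF} from the representation. Measurability is assumed, so axiom (1) holds; axiom (2) is immediate since $L(f,I)$ is either $\infty$ or the $\preceq_1$-maximal element of $S(f)\cap I\subseteq I$; shift compatibility follows from $S(f)=S(\theta_c f)+c$ together with the order-covariance in (b). Stability under restriction holds because if $x:=L(f,I_1)\in I_2\subseteq I_1$ then $x\in S(f)\cap I_2$ and $x$ dominates every element of $S(f)\cap I_2\subseteq S(f)\cap I_1$, so $x$ is again the maximum over $I_2$; and consistency of existence holds because $S(f)\cap I_2\neq\emptyset$ with $I_2\subseteq I_1$ forces $S(f)\cap I_1\neq\emptyset$.

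For the ``only if'' direction I would set $S(f)=\{t: t=L(f,I)\text{ for some }I\in\mathcal{I}\}$ and declare $x\preceq_0 y$ exactly when some interval containing both $x$ and $y$ chooses $y$ as its location, just as in Theorem~\ref{thm:order.representation2}. The point I would stress is that here $\preceq_0$ is \emph{already} a partial order, so the transitive-closure step and the entire loop analysis (including the analogue of Lemma~\ref{lemma:direct}) become unnecessary, and one may simply take $\preceq_1=\preceq_0$.

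Reflexivity is clear from the definition of $S(f)$. For antisymmetry, suppose $x\preceq_0 y$ and $y\preceq_0 x$ with witnessing intervals $I_1\ni x,y$ (choosing $y$) and $I_2\ni x,y$ (choosing $x$); then $J:=I_1\cap I_2$ is a non-degenerate interval containing both $x$ and $y$, and stability under restriction applied to $J\subseteq I_1$ and to $J\subseteq I_2$ forces $L(f,J)=y$ and $L(f,J)=x$, whence $x=y$. For transitivity, given $x\preceq_0 z$ and $z\preceq_0 y$ with witnesses $I_1\ni x,z$ and $I_2\ni z,y$, I would take $J=I_1\cup I_2$, which is an interval since $z\in I_1\cap I_2$; consistency of existence gives $w:=L(f,J)\neq\infty$, and since $J=I_1\cup I_2$ the value $w$ lies in $I_1$ or in $I_2$, so stability forces $w=z$ or $w=y$. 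If $w=y$ then $J$ witnesses $x\preceq_0 y$; if $w=z$ then $z\in I_2$ and a second application of stability gives $L(f,I_2)=z$, i.e. $y=z$, so again $x\preceq_0 y$.

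The shift-covariance conditions (a) and (b) then drop out of shift compatibility applied to the definitions of $S(f)$ and $\preceq_0$. The remaining step is the dichotomy. One half, that $L(f,I)\neq\infty$ implies $L(f,I)\in S(f)\cap I$, is immediate from axiom (2) and the definition of $S(f)$, and maximality follows because the interval $I$ itself witnesses $s\preceq_0 L(f,I)$ for every $s\in S(f)\cap I$. The converse, that $S(f)\cap I\neq\emptyset$ forces $L(f,I)\neq\infty$, is the one place I expect genuine care to be needed, since a witness $s\in S(f)\cap I$ may be produced by an interval $I''$ meeting $I$ only near an endpoint. I would resolve this by enlarging: taking a witness $I''$ with $L(f,I'')=s$ and setting $J=I\cup I''$ (an interval, as $s\in I\cap I''$), consistency gives $L(f,J)\neq\infty$; and since $J\setminus I\subseteq I''$, if $L(f,J)\notin I$ then $L(f,J)\in I''$, so stability applied to $I''\subseteq J$ would force $L(f,J)=s\in I$, a contradiction. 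Hence $L(f,J)\in I$, and stability applied to $I\subseteq J$ yields $L(f,I)=L(f,J)\neq\infty$. This enlargement argument for the existence half, rather than the partial-order verification, is the portion I would treat most carefully.
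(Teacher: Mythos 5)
Your proposal is correct and takes essentially the same route as the paper: the paper's proof defines the same set $S(f)$ of attained locations and the same relation ($x\preceq_1 y$ iff some interval containing both chooses $y$), and rests on exactly your observation that this relation is \emph{directly} a partial order, so the transitive-closure and loop analysis of Theorem~\ref{thm:order.representation2} are not needed. Your explicit verifications of antisymmetry (via intersections of witnessing intervals), transitivity (via unions), and the existence half of the dichotomy (via the enlargement $J=I\cup I''$) are precisely the routine details that the paper compresses into the remark that ``the argument goes through in the same way, and is actually simpler.''
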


\begin{proof}
Again, it is routine to check the ``if'' direction. For the other direction, define $S(f):=\{t\in \mathbb R: L(f,I)=t \text{ for some } I
\in{\mathcal I}\}$ and the binary relation $\preceq_1$ on $S(f)$: $x\preceq_1 y$ if and only if there exists an interval $I\in\mathcal I$ such that $x,y\in I$ and $L(f,I)=y$. The argument goes through in the same way, and is actually simpler, since such defined $\preceq_1$ is now directly a partial order.
\end{proof}

\begin{example}{\rm
Let $H$ be the space of all upper semi-continuous functions on
$\mathbb R$. The location of the path supremum
$\tau_{f,I}:=\inf\{t\in I: f(t)=\sup_{s\in I}f(s)\}, f\in H, I\in
{\mathcal I}$ is an intrinsic location functional. It corresponds
to an ordered set $(S(f),\preceq)$, where $S(f)=S^1(f)\cup
S^2(f)$, $S^1(f)$ is the union of the set of local maxima of $f$,
and $S^2(f):=\{t\in {\mathbb R}: t=\sup_{s\in[t-T,t]}(f(s)) \text{
or } t=\sup_{s\in[t,t+T]}(f(s))\}$. ``$\preceq$'' is firstly
ordered by the value of the function at the points and in case of
a tie, inversely ordered by the location (that is, the
locations on the left receive high orders).}
\end{example}

\begin{example}{\rm
Let $H$ be the space of all continuous functions on $\mathbb R$.
The first hitting time of a level $l$ over an interval $I$,
defined by $T^l_{f,I}:=\inf\{t\in I: f(t)=l\}$ is also an
intrinsic location functional. The ordered set $(S(f),\preceq)$ is
now given by $S(f)=f^{-1}(l)$ and the inverse order on the real
line.}
\end{example}

It is clear that the partially ordered random set representation
of a local intrinsic location functional or an intrinsic location
functional can not be unique, since one can always add irrelevant
points to $S(f)$ and assign them very low orders, so that the
added points are actually never chosen as the location for any
interval. However, there exists a unique minimal representation,
as indicated by the proof of Theorem
\ref{thm:order.representation2}.

\begin{corollary}\label{cor:minimal.representation}
Let $L$ be a local intrinsic location functional (resp. intrinsic
location functional) with path space $H$. There exists a partially
ordered set $(S(f),\preceq)$ for each function $f\in H$,
satisfying the conditions in Theorem
\ref{thm:order.representation2} (resp. Corollary
\ref{cor:order.representation}), such that for any other partially
ordered set $(S'(f),\preceq ')$ also satisfying the same
conditions,
$$
S(f)\subseteq S'(f)
$$
and
$$
s_1, s_2\in S(f), s_1\preceq s_2 \text{ implies } s_1\preceq ' s_2 \text{ in } S'(f).
$$
\end{corollary}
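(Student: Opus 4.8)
The goal is to establish that the minimal representation claimed in Corollary \ref{cor:minimal.representation} is exactly the one constructed in the proof of Theorem \ref{thm:order.representation2} (and Corollary \ref{cor:order.representation}). So my plan is to take $(S(f),\preceq)$ to be precisely the canonical set and order built in those proofs, namely
$$
S(f)=\{t\in\mathbb R:\ t=L_T(f,a)\ \text{for some}\ a\in\mathbb R\}
$$
(with $L(f,I)$ in place of $L_T(f,a)$ in the intrinsic case), and $\preceq$ the transitive closure of the binary relation $\preceq_0$ defined there. The whole point is that both the set and the order are forced by the values of $L_T$ itself, so any competing representation must contain them. I will argue minimality of the set and of the order separately.

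\textbf{Minimality of the set.} Let $(S'(f),\preceq')$ be any partially ordered set satisfying the conditions of the theorem. I want to show $S(f)\subseteq S'(f)$. Take any $s\in S(f)$; by construction there is an interval $I$ (of length $T$ in the local case) with $L_T(f,a)=s$. Since $(S'(f),\preceq')$ represents $L_T$, the value $L_T(f,a)=s$ is finite, so by the representation the alternative $S'(f)\cap I=\emptyset$ is excluded; hence $L_T(f,a)$ must be the maximal element of $S'(f)\cap I$ with respect to $\preceq'$. In particular $s=L_T(f,a)\in S'(f)$. This gives $S(f)\subseteq S'(f)$ immediately, and the argument is identical in the intrinsic case.

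\textbf{Minimality of the order.} Now suppose $s_1,s_2\in S(f)$ with $s_1\preceq s_2$; I must show $s_1\preceq' s_2$. It suffices to handle the generating relation $s_1\preceq_0 s_2$ and then extend by transitivity, since $\preceq'$ is itself transitive. If $s_1\preceq_0 s_2$, then by definition there is an interval $I_{s_1,s_2}$ containing both points with $L_T(f,a_{s_1,s_2})=s_2$. Using the representation via $(S'(f),\preceq')$, the point $s_2$ is the $\preceq'$-maximal element of $S'(f)\cap I_{s_1,s_2}$; since $s_1$ also lies in $S'(f)\cap I_{s_1,s_2}$ (by the set inclusion just proved, $s_1\in S'(f)$, and $s_1\in I_{s_1,s_2}$), maximality forces $s_1\preceq' s_2$. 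For a general $s_1\preceq s_2$, write the chain $s_1=u_0\preceq_0 u_1\preceq_0\cdots\preceq_0 u_k=s_2$ witnessing the relation in the transitive closure, apply the argument to each link to get $u_{j}\preceq' u_{j+1}$, and conclude $s_1\preceq' s_2$ by transitivity of $\preceq'$. The intrinsic case is again identical, working directly with the partial order $\preceq_1$.

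The only delicate point is that the representation condition is stated as an equivalence (maximal element \emph{according to} $\preceq'$), so I must be careful that $s_2$ being the maximal element of $S'(f)\cap I$ genuinely yields $s_1\preceq' s_2$ rather than merely incomparability; this is exactly where ``maximal'' must be read as ``greater than or equal to every element,'' which is guaranteed precisely because the representation produces a \emph{unique} maximal element over each admissible interval (as emphasized in Remark \ref{r:generalization}). I expect this to be the main obstacle: confirming that the phrase ``$L(f,I)$ is the maximal element'' forces comparability with every other point of $S'(f)\cap I$, so that maximality is order-theoretically equivalent to being a top element. Once that reading is fixed, the two inclusions above complete the proof, and uniqueness of $(S(f),\preceq)$ follows since any two minimal representations mutually contain each other and hence coincide.
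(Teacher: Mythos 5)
Your proposal is correct and is exactly the argument the paper has in mind: the paper omits the proof as easy but explicitly identifies the minimal representation as the canonical set $S(f)=\{t: L_T(f,a)=t \text{ for some } a\in\mathbb R\}$ (resp.\ $\{t: L(f,I)=t \text{ for some } I\in\mathcal I\}$) with the order generated by $\preceq_0$, and your two steps (set inclusion, then order inclusion link-by-link through the transitive closure) are the intended verification. The one delicate point you flag---whether ``the maximal element'' forces comparability---is settled not by uniqueness but by the paper's own proof of Theorem \ref{thm:order.representation2}, whose final sentence reads the representation condition as $s\preceq L_T(f,a)$ for all $s\in S(f)\cap[a,a+T]$, i.e.\ as a greatest element, which is precisely what your order-minimality step requires.
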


The proof is easy and omitted. Notice that we do not only know the existence of the minimal representation, it is actually straightforward to write it down explicitly. For a local intrinsic location functional $L_T$ with related length $T>0$ and $f\in H$, $S(f)=\{t: L_T(f,a)=t \text{ for some } a\in\mathbb R\}$, and $\preceq$ is the smallest partial order such that $s_1\preceq s_2$ for all $s_1, s_2$ satisfying $s_1\in[a,a+T]$ and $L_T(f,a)=s_2$ for some $a\in\mathbb R$. Similarly, for an intrinsic location functional $L$ and $f\in H$, $S(f)=\{t: L(f,I)=t \text{ for some } I\in\mathcal I\}$, and $\preceq$ is given by $s_1\preceq s_2$ if $s_1\in I$ and $L(f,I)=s_2$ for some $I\in\mathcal I$.

\section{Extension and restriction}

The ordered set representation provides powerful tools for us to clarify the link between intrinsic location functional and local intrinsic location functional. The theorem below shows that a local intrinsic location functional is
``almost'' just a ``local'' version of an intrinsic location functional.

We call a mapping $L$ from $H\times {\mathcal I}$ to ${\mathbb
R}\cup \{\infty\}$ a ``pre-intrinsic location functional'', if it
satisfies all the defining properties in Definition \ref{de:ILF}
except for the measurability condition (1). In another word, a
pre-intrinsic location functional becomes an intrinsic location
functional once it is measurable for all compact intervals $I\in\cI$.

\begin{theorem}\label{th:extension}
Let $L$ be an intrinsic location functional. Then for any $T>0$,
\begin{equation}\label{eq:extension}
L_T(f,a):=L(f,[a,a+T])
\end{equation}
is a local intrinsic location functional. Conversely, let $L_T$ be
a local intrinsic location functional. Then there exists a
pre-intrinsic location functional $L$, such that (\ref{eq:extension})
holds for all $f\in H$ and $a\in\mathbb R$.
\end{theorem}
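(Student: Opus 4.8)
The statement has two directions, and I would treat them separately.

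The forward direction is the routine verification already anticipated in the Example following Definition~\ref{defn:lilf}: given an intrinsic location functional $L$, set $L_T(f,a):=L(f,[a,a+T])$ and check conditions (1)--(4) of Definition~\ref{defn:lilf}. Conditions (1)--(3) are inherited verbatim from $L$. For condition (4), suppose $L_T(f,a)=L(f,[a,a+T])\in[b,b+T]$ and put $J:=[a,a+T]\cap[b,b+T]$. Since $L(f,[a,a+T])\in J$, stability under restrictions gives $L(f,J)=L_T(f,a)$; comparing with $L(f,[b,b+T])$ through stability once more forces $L_T(f,b)=L_T(f,a)$ unless $L_T(f,b)\in[b,b+T]\setminus[a,a+T]$, while consistency of existence (applied to $J\subseteq[b,b+T]$) rules out the value $\infty$. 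The degenerate case $b=a+T$ is handled directly using only consistency of existence on overlapping intervals.

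For the converse I would argue through the ordered-set representation. By Theorem~\ref{thm:order.representation2}, each $f$ carries a shift-covariant partially ordered set $(S(f),\preceq)$ for which $L_T(f,a)$ is the $\preceq$-greatest element of $S(f)\cap[a,a+T]$, or $\infty$ when this set is empty. Using Remark~\ref{r:generalization} I would extend $\preceq$ to a shift-invariant total order $\preceq^\ast$; because the $\preceq$-greatest element of a length-$T$ window already dominates that window, it is still $\preceq^\ast$-greatest, so passing to $\preceq^\ast$ does not change $L_T$. I then define $L(f,I)$ for a general compact interval $I$ as follows: if $|I|\ge T$, let $L(f,I)$ be the $\preceq^\ast$-greatest element of $S(f)\cap I$ (and $\infty$ if this set is empty); if $|I|<T$, set $L(f,I):=\ell$ whenever some length-$T$ window $W\supseteq I$ has location $\ell:=L_T(f,\cdot)\in I$ (well defined independently of $W$ by antisymmetry of $\preceq^\ast$), and $L(f,I):=\infty$ otherwise. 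On windows $I=[a,a+T]$ this returns $L_T$, so (\ref{eq:extension}) holds, and it remains to check that $L$ is a pre-intrinsic location functional. Conditions (2)--(3) follow from the shift-invariance of $(S(f),\preceq^\ast)$, and the serious axioms are stability under restrictions and consistency of existence.

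Two lemmas would carry that verification. First, condition (4) of Definition~\ref{defn:lilf} implies that $c\mapsto \ell_c:=L_T(f,c)$ is non-decreasing in position (as $c$ grows the location either stays fixed or jumps strictly to the right of $c+T$), and that any window meeting $S(f)$ has a finite location. Second, a compactness argument shows that for $|I|\ge T$ the set $S(f)\cap I$, when nonempty, has a $\preceq^\ast$-greatest element: every point of $S(f)\cap I$ is $\preceq^\ast$-dominated by the location of some length-$T$ window contained in $I$, and if these window-locations admitted no $\preceq^\ast$-top, their positions would accumulate at some $p^\ast\in I$, whereupon the location of a window around $p^\ast$ inside $I$ (available precisely because $|I|\ge T$) would dominate all of them, a contradiction.

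The hard part is entirely the intervals of length less than $T$, where the local functional exerts no direct control and $S(f)\cap I$ may genuinely have no greatest element, so that a naive "maximum over $S(f)\cap I$'' is ill-posed and any crude $\infty$-assignment breaks consistency of existence. I expect the monotonicity lemma to be exactly what rescues both axioms: for nested short intervals $I_2\subseteq I_1=[a_1,b_1]$, if a window $W\supseteq I_2$ locates inside $I_2\subseteq I_1$, then since that location lies in $I_1$ the window $[a_1,a_1+T]$ or $[b_1-T,b_1]$ covering $I_1$ is nonempty (hence finitely located), and monotonicity of $\ell_c$ pins its location back into $I_1$; this yields consistency of existence, and a parallel sliding argument (including the mixed case where $I_1$ is long, handled via the second lemma) yields stability under restrictions. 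Confirming that every such slide lands the location back inside $I_1$, using only monotonicity and finiteness, is the technical heart of the proof.
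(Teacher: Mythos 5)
Your proposal is correct and follows the same architecture as the paper's proof: the forward direction by routine verification (yours is more detailed; the degenerate-overlap case you isolate is handled correctly), and the converse via the ordered-set representation of Theorem \ref{thm:order.representation2} together with a shift-invariant total-order extension as in Remark \ref{r:generalization}. Where you genuinely depart from the paper is in what happens after the extension. The paper defines $L(f,I)$ for \emph{every} compact $I$ as the $\preceq_1$-maximal element of $S(f)\cap I$ and immediately invokes Corollary \ref{cor:order.representation}; this presupposes that such a maximal element exists whenever $S(f)\cap I\neq\phi$, a point the paper leaves implicit. You correctly identify this as the crux: for $|I|\geq T$ existence is cheap, while for $|I|<T$ the set $S(f)\cap I$ has no a priori reason to admit a top under an arbitrary total extension. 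So you replace the max-definition there by a window-based one ($L(f,I)=\ell$ if some length-$T$ window $W\supseteq I$ has $L_T$-location $\ell\in I$, else $\infty$) and verify the axioms of Definition \ref{de:ILF} by hand using monotonicity (the paper's Lemma \ref{l:monotonicity}). Your plan does close: well-definedness follows from antisymmetry; stability in the mixed case follows by sliding a window $W$ with $I_2\subseteq W\subseteq I_1$ and using antisymmetry of the total order to force $\ell_W=L(f,I_1)$; and short-in-short consistency of existence follows from monotonicity by a three-way case analysis, writing $I_1=[\alpha,\beta]$, on whether the starting point of a window realizing $\ell$ lies in $[\beta-T,\alpha]$, to its left, or to its right, each case yielding a contradiction. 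In fact the same monotonicity argument shows your definition agrees with the paper's: for $|I|<T$, if no window containing $I$ locates inside $I$, then $S(f)\cap I=\phi$, so the greatest element the paper needs does exist after all. Your route makes this fact visible and proved; the paper's route leaves it to the reader, at the price of your longer direct verification versus the paper's one-line appeal to Corollary \ref{cor:order.representation}.

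One step of yours needs repair: the existence argument for $|I|\geq T$ via an accumulation point $p^\ast$ does not work as stated, because the location of a window around $p^\ast$ dominates only those window-locations lying in that window, not ``all of them.'' Replace it by a finite-cover argument: finitely many length-$T$ windows contained in $I$ cover $I$; every point of $S(f)\cap I$ is $\preceq^\ast$-dominated by one of the finitely many corresponding locations, all of which lie in $S(f)\cap I$; and a finite nonempty subset of a totally ordered set has a greatest element. This is a local fix and does not affect the structure of your proof.
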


\begin{proof}
The fact that a restricted intrinsic location functional is a
local intrinsic location functional can be easily checked either
by their definitions or by the ordered set representation. For the
other direction, suppose we have a local intrinsic location
functional $L_T$, with the partially ordered set $(S(f),\preceq)$
for each $f\in H$. By the order extension principle,
$(S(f),\preceq)$ can always be extended, in a shift-invariant way,
to a totally ordered set $(S(f),\preceq_1)$, which is, of course,
a special partially ordered set. Define $L(f,I)$ for any $I\in
\mathcal I$ by taking the maximal element in $I$ of $S(f)$
according to $\preceq_1$: $L(f,I)\in S(f)$ and $s\preceq_1 L(f,I)$
for all $s\in S(f)\cap I$, then by Corollary
\ref{cor:order.representation} such defined $L$ is a pre-intrinsic
location functional.
\end{proof}

Notice, however, that we have not touched the measurability issue
and claimed that each local intrinsic location functional
necessarily has an intrinsic location functional extension. The
problem of measurability is highly nontrivial and in general, the
measurability of a local intrinsic location functional for
intervals with a single fixed length may not be enough to
guarantee the measurability of its extensions with all different
interval lengths. Instead, we prove the following result, which
shows that there always exists an intrinsic location functional
which agrees almost surely with the given local intrinsic location
functional for any stationary process in the interior of any
interval with the fixed length.

\begin{proposition}\label{prop:almost.same}
Let $L_T:H\times{\mathbb R}\to{\mathbb R}\cup\{\infty\}$ be a local intrinsic location functional with related length $T$. Then there exists an intrinsic location functional $L:H\times \cI\to{\mathbb R}\cup\{\infty\}$, such that for any $a\in\mathbb R$ and stationary process $\BX$ with paths in $H$,
$$
\mathbb P [L_T(\BX,a)\neq L(\BX,[a,a+T]),
$$
$$
L_T(\BX,a)\in(a,a+T)\text{ or }L(\BX,[a,a+T])\in(a,a+T) ]=0.
$$
\end{proposition}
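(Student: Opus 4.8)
The plan is to start from the ordered-set representation. By Theorem~\ref{thm:order.representation2}, the local intrinsic location functional $L_T$ corresponds to a shift-covariant family of partially ordered sets $(S(f),\preceq)$, where $L_T(f,a)$ is the $\preceq$-maximal element of $S(f)\cap[a,a+T]$ (or $\infty$ when the intersection is empty). By Theorem~\ref{th:extension} (and Remark~\ref{r:generalization}), extend $\preceq$ to a shift-invariant total order $\preceq_1$ on $S(f)$, and let $L(f,I)$ pick the $\preceq_1$-maximal element of $S(f)\cap I$; by Corollary~\ref{cor:order.representation} this $L$ is a \emph{pre}-intrinsic location functional satisfying $L(f,[a,a+T])=L_T(f,a)$ whenever the extension does not alter the choice of maximal element. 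The whole difficulty, as flagged in the discussion preceding the statement, is that this $L$ need not be measurable. So the goal is not to make $L_T$ itself extend, but to build \emph{some} genuinely measurable intrinsic location functional $L$ that coincides with $L_T$ on the interior with probability one under every stationary $\BX$.

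The key observation driving the construction is that the two candidate locations $L_T(\BX,a)$ and $L(\BX,[a,a+T])$ can only differ on the interior event if the $\preceq_1$-maximal point of $S(\BX)\cap[a,a+T]$ differs from the $\preceq$-maximal point, i.e.\ if $S(\BX)\cap[a,a+T]$ contains two $\preceq$-incomparable elements, at least one in the open interval $(a,a+T)$. Thus I would first isolate the ``bad'' event
\begin{equation}\label{e:bad}
B_a=\{f\in H:\ \exists\,s_1,s_2\in S(f)\cap[a,a+T],\ s_1,s_2\ \preceq\text{-incomparable},\ s_1\in(a,a+T)\ \text{or}\ s_2\in(a,a+T)\},
\end{equation}
and show that under any stationary $\BX$ one has $\mathbb P(\BX\in B_a)=0$. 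This is where Theorem~\ref{t:density.rcll} enters: the distribution of an interior location of any intrinsic (hence local intrinsic) location functional is absolutely continuous, and the total variation constraints force the density to be finite in the interior. The presence of two incomparable points inside the interval would mean that an arbitrarily small horizontal perturbation of the interval endpoints switches the selected location discontinuously between the two, which I expect to manifest as an atom or a density blow-up contradicting the absolute continuity / bounded-variation conclusions of Theorem~\ref{t:density.rcll}. Making this ``incomparable points have probability zero'' argument precise is, I believe, the main obstacle: one must relate the purely order-theoretic incomparability in $S(f)$ to a distributional statement, presumably by exhibiting a doubly-parametrized family of auxiliary intrinsic location functionals that would detect such pairs and then invoking the density bound \eqref{e:density.bound} or the total-variation constraints to rule them out.

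Granting that $\mathbb P(\BX\in B_a)=0$ for every $a$ and every stationary $\BX$, the remaining task is to produce the measurable $L$. Here I would avoid the non-measurable order-extension and instead define $L$ directly and measurably by a canonical tie-breaking rule that agrees with $L_T$ off the bad set. Concretely, on the complement of $\bigcup_a B_a$ the $\preceq$-maximum over any interval containing a point of $S(f)$ in its interior is already unique, so $L_T$ determines the value; on the negligible remainder I would set $L$ by a fixed, measurable and shift-compatible convention (for instance, the leftmost candidate), which is harmless precisely because it only affects a probability-zero set of paths and, on the interior, a probability-zero event. Verifying that the resulting $L$ satisfies the four defining properties of Definition~\ref{de:ILF}---in particular shift compatibility and stability under restriction---reduces, via Corollary~\ref{cor:order.representation}, to checking that the tie-breaking convention is itself shift-invariant and consistent under nesting of intervals, which is routine once the measurable selection is in place. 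Finally, assembling \eqref{e:bad} with the definition of $L$ gives, for each fixed $a$ and each stationary $\BX$, that the event
\[
\{L_T(\BX,a)\neq L(\BX,[a,a+T]),\ L_T(\BX,a)\in(a,a+T)\ \text{or}\ L(\BX,[a,a+T])\in(a,a+T)\}\subseteq B_a
\]
has probability zero, which is exactly the claimed conclusion.
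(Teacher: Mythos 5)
Your diagnosis of where the disagreement between $L$ and $L_T$ comes from is incorrect, and this breaks the whole plan. In the representation of Theorem \ref{thm:order.representation2} (see the end of its proof), $L_T(f,a)$ is not merely a maximal element but the \emph{greatest} element of $S(f)\cap[a,a+T]$: every $s$ in that intersection satisfies $s\preceq L_T(f,a)$. A greatest element remains greatest under any order extension, so the order-extension functional of Theorem \ref{th:extension} satisfies $L(f,[a,a+T])=L_T(f,a)$ \emph{identically}; the scenario in which incomparable points inside a length-$T$ window cause the total order to pick a different point cannot occur. Your event $B_a$ is therefore aimed at the wrong target, and moreover the claim $\mathbb{P}(\BX\in B_a)=0$ is false in general: incomparable pairs of non-greatest points occur with positive probability (for the supremum location of, say, a stationary Gaussian process, two local maxima separated by a higher peak are typically incomparable under the minimal order, since every length-$T$ window containing both also contains the higher peak). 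This is harmless for the true statement, but it is fatal for your construction: on the positive-probability event $B_a$ your fallback rule replaces the value by the leftmost candidate of $S(\BX)\cap[a,a+T]$, which generically differs from $L_T(\BX,a)$ in the interior, so your $L$ would violate the very conclusion being proved. The appeal to Theorem \ref{t:density.rcll} is also a category error: that theorem constrains the \emph{distributions} of locations, not the order structure of the representing set, and no argument is given connecting the two.

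The genuine difficulty, which your proposal defers with ``once the measurable selection is in place,'' is the measurability of $L(\cdot,I)$ for $|I|\neq T$, and that is where essentially all of the paper's work lies. The paper does not use the full set $S(f)$: it shrinks it to $S'(f)=S_1(f)\cup S_2(f)$, where $S_1(f)$ consists of points that are locations for an open interval of starting points and $S_2(f)$ of points $t$ with $L_T(f,t)=t$ or $L_T(f,t-T)=t$; it defines $L(f,I)$ as the \emph{leftmost} $\preceq$-maximal point of $S'(f)\cap I$, and then proves measurability by hand (Lemma \ref{l:series} plus a dyadic covering argument, split into the cases $|I|>T$, $|I|=T$, $|I|<T$). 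The price of shrinking to $S'$ is exactly the disagreement event of the proposition: $L_T(\BX,a)$ may lie outside $S'(\BX)$, namely when it is an interior location selected for the single starting point $a$ only. That event is killed not by density or total-variation constraints but by a pigeonhole argument: if it had probability $\delta>0$, stationarity gives $\lfloor 1/\delta\rfloor+1$ closely spaced starting points whose corresponding events are pairwise disjoint (by Lemma \ref{l:monotonicity}) and each of probability $\delta$, for a total probability exceeding $1$. Finally, note that ``modifying $L$ on a probability-zero set of paths'' is not meaningful in this setting: $L$ must be defined pathwise on all of $H$ and be measurable before any probability measure enters, and the exceptional sets depend on the arbitrary stationary process $\BX$, so they cannot be discarded at the level of the functional.
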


Before we go to the proof of Proposition \ref{prop:almost.same},
let us first look at a useful lemma.

\begin{lemma}\label{l:monotonicity}
Let $L_T$ be a local intrinsic location functional defined on
$H\times \mathbb R$. Then
\begin{enumerate}
\begin{item}
For any $f\in H$, any $a<b$ such that $L_T(f,a)\neq\infty$ and
$L_T(f,b)\neq\infty$, $L_T(f,a)\leq L_T(f,b)$.
\end{item}
\begin{item}
If $L_T(f,a)=L_T(f,b)=t\neq\infty$, then $L_T(f,c)=t$ for any
$c\in [a,b]$.
\end{item}
\begin{item}
If $a<b$, $b-a\leq T$ and $L_T(f,a)=L_T(f,b)=\infty$, then
$L_T(f,c)=\infty$ for all $c\in[a,b]$.
\end{item}
\end{enumerate}
\end{lemma}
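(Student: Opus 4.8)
The plan is to derive all three statements directly from condition (4) of Definition \ref{defn:lilf}, which is the only structural constraint at our disposal (it plays the role of both stability under restriction and consistency of existence from Definition \ref{de:ILF}), together with elementary bookkeeping of interval endpoints. Throughout I abbreviate $t_c := L_T(f,c)$, recalling $t_c \in [c,c+T]\cup\{\infty\}$, and I use condition (4) in the form: if $t_\alpha \in [\beta,\beta+T]$ then $t_\beta = t_\alpha$ or $t_\beta \in [\beta,\beta+T]\setminus[\alpha,\alpha+T]$. For (i), I would fix $a<b$ with $t_a,t_b$ finite and split on whether $t_a<b$. If $t_a<b$ then $t_a<b\le t_b$ immediately. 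Otherwise $b\le t_a\le a+T\le b+T$, so $t_a\in[b,b+T]$, and applying condition (4) with $\alpha=a$, $\beta=b$ gives either $t_b=t_a$ or $t_b\in[b,b+T]\setminus[a,a+T]$; in the second case $t_b>a$ together with $t_b\notin[a,a+T]$ forces $t_b>a+T\ge t_a$. Either way $t_a\le t_b$.

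For (ii), suppose $t_a=t_b=t$ is finite; then $b\le t\le a+T$, so in particular $b\le a+T$, and for any $c\in[a,b]$ the chain $c\le b\le t\le a+T\le c+T$ shows $t\in[c,c+T]$. I would then apply condition (4) with $\beta=c$ and $\alpha=a$, and again with $\alpha=b$, each time using $t\in[c,c+T]$; each application yields $t_c=t$ unless $t_c\in[c,c+T]\setminus[a,a+T]$ and $t_c\in[c,c+T]\setminus[b,b+T]$ simultaneously (note the value $\infty$ is also ruled out, since $\infty$ satisfies neither alternative). But $t_c\ge c\ge a$ forces $t_c>a+T$, while $t_c\le c+T\le b+T$ forces $t_c<b$, contradicting $b\le a+T$. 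Hence $t_c=t$.

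For (iii), assume $a<b$, $b-a\le T$, and $t_a=t_b=\infty$, and suppose toward a contradiction that $t_c$ is finite for some $c\in(a,b)$. Since $b-a\le T$ gives $b\le a+T$, the union $[a,a+T]\cup[b,b+T]$ equals $[a,b+T]$ and hence contains $[c,c+T]\ni t_c$, so $t_c$ lies in $[a,a+T]$ or in $[b,b+T]$. If $t_c\in[a,a+T]$, condition (4) with $\alpha=c$, $\beta=a$ forces $t_a=t_c$ or $t_a\in[a,a+T]\setminus[c,c+T]$, both impossible for $t_a=\infty$; the case $t_c\in[b,b+T]$ contradicts $t_b=\infty$ in the same way. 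Thus $t_c=\infty$ for every $c\in[a,b]$.

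The argument involves no serious obstacle: the only care required is to invoke condition (4) in the correct direction --- testing the location of one starting point against the interval of another --- and to keep track of which endpoint inequality is strict. The one genuinely load-bearing hypothesis is $b-a\le T$ in part (iii): it is exactly what guarantees that $[a,a+T]\cup[b,b+T]$ covers $[c,c+T]$, so that any finite candidate value for $t_c$ is trapped inside one of the two intervals whose location is $\infty$, which is what produces the contradiction.
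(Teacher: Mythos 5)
Your proof is correct. For parts (i) and (ii) it is close in spirit to the paper's, though not identical: the paper proves (i) by contradiction (if $L_T(f,b)<L_T(f,a)<\infty$, both locations lie in $[b,a+T]=[a,a+T]\cap[b,b+T]$ and condition (4) forces them to coincide), and proves (ii) by first using condition (4) to rule out $L_T(f,c)=\infty$ and then sandwiching $L_T(f,a)\leq L_T(f,c)\leq L_T(f,b)$ via part (i); you instead apply condition (4) twice, once testing against $a$ and once against $b$, and derive the contradiction $a+T<t_c<b$ from interval arithmetic, which is equally elementary and avoids relying on part (i). The genuine divergence is in part (iii): the paper invokes the ordered-set representation (Theorem \ref{thm:order.representation2}), translating $L_T(f,a)=L_T(f,b)=\infty$ into $S(f)\cap[a,a+T]=S(f)\cap[b,b+T]=\phi$ and concluding from the covering $[c,c+T]\subset[a,a+T]\cup[b,b+T]$, whereas you stay entirely inside Definition \ref{defn:lilf}, noting that a finite $t_c$ would have to land in $[a,a+T]$ or $[b,b+T]$, and that condition (4) would then force the corresponding endpoint location to be finite---impossible, since $\infty$ satisfies neither alternative in the conclusion of condition (4). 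Your version buys self-containedness: the lemma is exhibited as a direct consequence of the definition, with no appeal to the representation theorem (useful, e.g., if one wanted to cite the lemma before, or independently of, that theorem); the paper's version is shorter given that the representation machinery is already in place, and the same covering observation $[c,c+T]\subset[a,a+T]\cup[b,b+T]$ is the load-bearing step in both.
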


\begin{proof}
Suppose for some $a<b$, $L_T(f,b)< L_T(f,a)<\infty$. Then both
$L_T(f,a)$ and $L_T(f,b)$ are in the interval
$[a,a+T]\cap[b,b+T]$. However, by the definition of local
intrinsic location functional, this implies that they must be
equal. Thus the first claim of the proposition is proved. Now
assume $L_T(f,a)=L_T(f,b)=t\neq\infty$. Then
$t\in[a,a+T]\cap[b,b+T]=[b,a+T]\neq\phi$. For any $c\in[a,b]$,
$[c,c+T]\supseteq [b,a+T]$, hence $t\in[a,a+T]\cap[c,c+T]$. By
definition of local intrinsic location functional,
$L_T(f,c)\neq\infty$. Then by the first claim of the proposition,
$t=L_T(f,a)\leq L_T(f,c)\leq L_T(f,b)=t$. Therefore $L_T(f,c)=t$
as well. Finally, if $a<b$, $b-a\leq T$, then for any $c\in[a,b]$,
$[c,c+T]\subset [a,a+T]\cup[b,b+T]$. If
$L_T(f,a)=L_T(f,b)=\infty$, then by Theorem
\ref{thm:order.representation2}, $[a,a+T]\cap S(f)=[b,b+T]\cap
S(f)=\phi$, where $S(f)$ is a set of points corresponding to
$L_T$. As a result, $[c,c+T]\cap S(f)=\phi$, which, going back to
$L_T$, means that $L_T(f,c)=\infty$.

\end{proof}

\begin{proof}[Proof of Proposition \ref{prop:almost.same}]
For any function $f\in H$, define the sets
$$
S_1(f):=\{t\in {\mathbb R}: \exists (x,y)\subset {\mathbb R}, \text{ s.t. } L_T(f,a)=t, \forall a\in (x,y)\},
$$
$$
S_2(f):=\{t\in{\mathbb R}\setminus S_1(f): L_T(f,t)=t \text{ or } L_T(f,t-T)=t\}
$$
and $S'(f)=S_1(f)\cup S_2(f)$.

On $S'(f)$ assign a binary relation $\preceq_0$: $t_1\preceq_0
t_2$ if and only if $|t_2-t_1|<T$ and there exists a real number
$a$ satisfying $t_1,t_2\in[a,a+T]$ such that $L_T(f,a)=t_2$.
Notice that the set $S'(f)$ is a subset of the set we constructed
in the proof of Theorem \ref{th:extension}, and $\preceq_0$ is
also a restriction of the corresponding binary relation that we
saw before. As a result, one can again extend $\preceq_0$ to a
smallest partial order, still denoted by $\preceq$.

For function $f\in H$ and a compact interval $I$, define $L(f,I)$
to be the first element in $S'(f)$ which is maximal in $I$:
$$
L(f,I)=\inf\{t\in S'(f)\cap I: t'\in S'(f)\cap I \text{ and }
t\preceq t' \text{ implies } t'=t\}.
$$
We can denote the set on the right hand side of the definition
above, namely, the set of all the maximal in $I$ points in
$S'(f)$, by $M_{f,I}$. Then $L(f,I)$ is simply $\inf(M_{f,I})$,
with the tradition that $\inf(\phi)=\infty$. Indeed, this way of
choosing the first maximal element is equivalent to assigning an
additional order among the maximal elements according to their
location, with the left receiving the higher order and the right
lower. The resulting new order will then satisfy all the
conditions listed in Corollary \ref{cor:order.representation},
which assures that such defined $L(f,I)$ is a pre-intrinsic
location functional. Thus all that is left is to check the
measurability.

Fix $I=[a,b]$ with $|I|=b-a>T$ and $f\in H$. The event $\{L(f,I)\leq s\}$ is
$\{a\in M_{f,I}\}$ if $s=a$, $\{a\in M_{f,I}\}\cup
\{M_{f,I}\cap(a,s]\neq\phi\}$ if $s\in (a,b)$, and $\{a\in
M_{f,I}\}\cup \{M_{f,I}\cap (a,s)\neq\phi\}\cup \{b\in M_{f,I}\}$ if $s=b$.
Therefore it suffices to verify the measurability for each of
these sets.

\begin{lemma}\label{l:series}
Let $I=[a,b]$, $b-a>T$ and $t\in(a,b)$, then $t\in M_{f,I}$ if and only if
for some sequences $\{t_{1n}\}_{n=1,2,...}$ and
$\{t_{2n}\}_{n=1,2,...}$ such that $t_{1n}\to t$ and $t_{2n}\to t$
as $n\to\infty$, $L_T(f, a\vee (t_{1n}-T))=L_T(f,(b-T)\wedge
t_{2n})=t$ holds for $n=1,2,...$.
\end{lemma}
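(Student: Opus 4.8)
The plan is to translate everything into the geometry of the level set
$$
J_t:=\{c\in\bbr: L_T(f,c)=t\}.
$$
By Lemma \ref{l:monotonicity} the map $L_T(f,\cdot)$ is non-decreasing on its finite part and $J_t$ is an interval contained in $[t-T,t]$. The first step is the elementary but crucial reformulation of the two sequence conditions: since $a\vee(t_{1n}-T)\to\max(a,t-T)$ and $(b-T)\wedge t_{2n}\to\min(b-T,t)$, and since $L_T(f,c)=t$ forces $t\in[c,c+T]$, the existence of the left sequence is equivalent to $\inf J_t\le\max(a,t-T)$ and the existence of the right sequence to $\sup J_t\ge\min(b-T,t)$ (with the value $a$, resp. $b-T$, actually lying in $J_t$ in the clipped cases $t\le a+T$, resp. $t\ge b-T$). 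A short computation using $b-a>T$ and $t\in(a,b)$ gives $\max(a,t-T)<\min(b-T,t)$ in all configurations, so once both conditions hold, $J_t$ has positive length and hence $t\in S_1(f)\subseteq S'(f)\cap I$.

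For the ``if'' direction I would deduce from the conditions that $t$ dominates every nearby point. In the right-unclipped case $t<b-T$, the relation $\sup J_t=t$ produces, for each $u\in(t,t+T)\cap S'(f)$, a $c\in J_t$ with $c<t<u\le c+T$, so $u\preceq_0 t$; in the right-clipped case $t\ge b-T$, every $u\in S'(f)\cap I$ with $u>t$ lies in $(b-T,b]$, so $|u-t|<T$ and the interval $[b-T,b]$ (whose location is $t$) gives $u\preceq_0 t$; the left side is symmetric. Now suppose for contradiction that some $s\in S'(f)\cap I$, $s\neq t$, satisfies $t\prec s$; say $s>t$. Take a non-repeating chain $t=u_0\preceq_0\cdots\preceq_0 u_k=s$ and let $j$ be the first index with $u_j>t$. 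Since $u_{j-1}\le t$ and $|u_j-u_{j-1}|<T$ we get $u_j\in(t,t+T)$, so in the unclipped case $u_j\preceq_0 t$, while in the clipped case $s\preceq_0 t$ directly. Either way a point that is $\succeq t$ is also $\preceq t$, and antisymmetry of the partial order $\preceq$ forces it to equal $t$, a contradiction. Hence $t$ is maximal in $S'(f)\cap I$, i.e. $t\in M_{f,I}$.

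For the ``only if'' direction I argue the contrapositive (if $t\notin S'(f)$ then $t\notin M_{f,I}$ trivially, so assume $t\in S'(f)$, whence $J_t\neq\phi$). Suppose, say, $\sup J_t<\min(b-T,t)$. As $\max(a,\sup J_t)<\min(b-T,t)$, choose $c$ with $\max(a,\sup J_t)<c<\min(b-T,t)$; then $[c,c+T]\subseteq I$ and $t\in(c,c+T)$, so by the representation of Theorem \ref{thm:order.representation2} the value $m:=L_T(f,c)$ is the maximum of $S(f)\cap[c,c+T]$, giving $m\in S(f)\cap I$, $m>t$ (by monotonicity and $c\notin J_t$) and $t\prec m$ in $(S(f),\preceq)$. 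Since $m$ need not lie in $S'(f)$, I pass to a $\preceq$-maximal element $m^{*}$ of $S(f)\cap I$ with $m^{*}\succeq m\succ t$ (extend $\preceq$ to the total order $\preceq_1$ of Theorem \ref{th:extension} and take its maximum over the up-set of $m$). Because $b-a>T$, the set $\{c:[c,c+T]\subseteq I,\ m^{*}\in[c,c+T]\}=[\max(a,m^{*}-T),\min(b-T,m^{*})]$ has positive length, and maximality of $m^{*}$ forces $L_T(f,c)=m^{*}$ on all of it; hence $m^{*}\in S_1(f)\subseteq S'(f)$. Thus $m^{*}\in S'(f)\cap I$ dominates $t$, so $t\notin M_{f,I}$. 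The failure of the left condition is handled symmetrically.

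The main obstacle is the interplay between the ambient order $\preceq$ on $S(f)$ and its restriction to the smaller set $S'(f)$ that actually defines $M_{f,I}$, compounded by the boundary bookkeeping. Concretely, in the converse one must realize the domination $t\prec m^{*}$ by a relation living inside $S'(f)$, not merely inside $S(f)$; this is exactly where $b-a>T$ is indispensable, since it guarantees enough room to slide a full $T$-window inside $I$, which makes every maximal-in-$I$ element ``stable'' (a genuine plateau of $L_T(f,\cdot)$, hence in $S_1(f)$) and lets consecutive plateau values be compared directly by $\preceq_0$ within $S'(f)$. Keeping straight the four clipped/unclipped configurations, and the degenerate separations equal to exactly $T$ where $\preceq_0$ just fails to apply, is the fiddly part; the rest is routine given Lemmas \ref{l:monotonicity} and \ref{lemma:direct}.
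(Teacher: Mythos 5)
Your ``if'' half (both sequence conditions imply $t\in M_{f,I}$) is essentially sound, and it is in substance the paper's own argument for that implication run in the opposite logical direction: the paper shows that non-maximality produces a \emph{direct} relation $t\preceq_0 s$ with $s\in I$, $|s-t|<T$, which blocks $J_t$ on one side, while you show that an unblocked $J_t$ on both sides makes every candidate dominator (the first chain element above $t$, or, in the clipped case, the dominator itself) satisfy $u\preceq_0 t$, and then invoke antisymmetry of $\preceq$. Your first-link reduction in fact makes explicit a step the paper asserts without justification (why a chain dominator can be replaced by a direct one inside $I$). The distance-exactly-$T$ degeneracies you flag are harmless: when $t=b-T$ the right condition already forces $\sup J_t=t$, so your unclipped argument applies there and the case $|s-t|=T$ never needs to be confronted.

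The genuine gap is in your ``only if'' half, at the passage through $m^{*}$. First, $m^{*}$ is not known to exist: the up-set $\{x\in S(f)\cap I:\ m\preceq x\}$ may be infinite, and an infinite totally ordered set need not have a maximum; neither Theorem \ref{th:extension} nor Zorn's lemma (no chain bounds are available) provides one. Second, and more fundamentally, even granting $m^{*}$, the domination $t\prec m\preceq m^{*}$ you obtain lives in the order of Theorem \ref{thm:order.representation2} on $S(f)$, whereas $M_{f,I}$ is defined by maximality in $S'(f)$ under the transitive closure of the relation $\preceq_0$ of Proposition \ref{prop:almost.same}, whose links must join points of $S'(f)$ at distance strictly less than $T$. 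A chain in $S(f)$ passing through points outside $S'(f)$ yields no such chain; this is exactly the obstacle you name in your last paragraph, but the ``comparison of consecutive plateau values'' is never constructed, so the concluding sentence ``$m^{*}\in S'(f)\cap I$ dominates $t$, so $t\notin M_{f,I}$'' is unproved. A smaller omission: for $t>b-T$ the negation of the right condition is $b-T\notin J_t$ (as your own reformulation notes), which permits $\sup J_t\geq b-T$; your case hypothesis $\sup J_t<\min(b-T,t)$ does not cover this, and in the missed configuration the dominator $L_T(f,b-T)$ lies \emph{below} $t$, so the argument does not transfer verbatim.

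The repair is much shorter than the detour: prove that $m=L_T(f,c)$ itself lies in $S'(f)$. If not, Lemma \ref{l:monotonicity} forces the level set $J_m$ to be the single point $\{c\}$ with $m\in(c,c+T)$; then, taking any $c'\in J_t$ and applying condition (4) of Definition \ref{defn:lilf} to the window pairs $([c',c'+T],[c,c+T])$, $([c',c'+T],[m-T,m])$ and $([c,c+T],[m-T,m])$, one finds that no admissible value for $L_T(f,m-T)$ remains, a contradiction. Once $m\in S'(f)$, the window $[c,c+T]$ gives $t\preceq_0 m$ directly inside $S'(f)$ (the distance is less than $T$ because $t$ is interior to the window), and $m\in I$, so $t\notin M_{f,I}$ with no need for $m^{*}$ or any order extension. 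Note that the paper takes the more direct route for this half -- maximality of $t$ forces every window $[s-T,s]\subset I$ containing $t$ in its interior to choose $t$ -- which avoids constructing a dominator altogether, although it quietly relies on the very same membership fact ($L_T(f,s-T)\in S'(f)$) that your proposal leaves unestablished.
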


\begin{proof}

Firstly assume that $t\in M_{f,I}\cap (a,b)$. If $a\leq t-T$, then
for any $s\in(t,(t+T)\wedge b)$, $[s-T,s]\subset (a,b)$, and $t\in
(s-T,s)$. By the maximality of $t$ under the partial order
$\preceq$, $L_T(f,s-T)=t$. Therefore we only need to take
$\{t_{1n}\}_{n=1,2,...}$ a decreasing sequence converging to $t$
with $t_{11}<(t+T)\wedge b$ to have $L_T(f,t_{1n}-T)=t$. If $a >
t-T$, then the maximality implies that $L_T(f,a)=t$. Combining
these two cases, there always exist $\{t_{1n}\}_{n=1,2,...}$ such that $L_T(f, a\vee (t_{1n}-T))=t$. Symmetrically we
have $L_T(f,(b-T)\wedge t_{2n})=t$ for some $\{t_{2n}\}_{n=1,2,...}$.

The case where $t\notin S'(f)$ being trivial, now suppose $t\in S'(f)\cap (a,b)$ but $t\notin M_{f,I}$. Then
there exists $s\in (t-T,t+T)\cap [a,b]$ such that $t\preceq_0 s$.
Without loss of generality, assume that $s<t$. Then for any $r\in
[t-T,s]$, $L_T(f,r)\neq t$, since otherwise $s\preceq_0 t$.
Therefore there does not exist a sequence
$\{t_{1n}\}_{n=1,2,...}$, such that $L_T(f, a\vee (t_{1n}-T))=t$.
The lemma is proved.
\end{proof}

For any $x,y$ such that $a\leq x<y\leq b$, denote by $E_{I}(x,y)$
the event $L_T(f,a\vee (y-T))=L_T(f,(b-T)\wedge x)\neq \infty$.
For $r,s\in(a,b)$ and $m=1,2,...$, define event
$E_{I,m}(r,s)=\bigcup_{i=1}^{2^m-1}
E_{I}(r+\frac{(i-1)(s-r)}{2^m},r+\frac{(i+1)(s-r)}{2^m})$.
Consider the set
$$
\begin{array}{rcl}
E(I,r,s) & := & \bigcup_{n=1}^\infty \bigcap_{m=n}^\infty E_{I,m}(r,s)\\
& = & \bigcup_{n=1}^\infty \bigcap_{m=n}^\infty \bigcup_{i=1}^{2^m-1} E_{I}(r+\frac{(i-1)(s-r)}{2^m},r+\frac{(i+1)(s-r)}{2^m}).
\end{array}
$$
It is clearly measurable. Suppose there is a point $t\in(r,s)$ in $M_{f,I}$. For any $m$ large enough, let $i'$ be an index satisfying $t\in(r+\frac{(i'-1)(s-r)}{2^m},r+\frac{(i'+1)(s-r)}{2^m})$. Then event $E_{I}(r+\frac{(i'-1)(s-r)}{2^m},r+\frac{(i'+1)(s-r)}{2^m})$ holds. Consequently $E_{I,m}(r,s)$ holds hence $E(I,r,s)$ also holds. Thus $\{M_{f,I}\cap (r,s)\neq\phi\}\subseteq E(I,r,s)$. On the other hand, suppose $E(I,r,s)$ is realized. Then for all $m$ large enough, $E_{I}(r+\frac{(i-1)(s-r)}{2^m},r+\frac{(i+1)(s-r)}{2^m})$ holds for some $i=1,...,2^m-1$. Denote by $J_m$ the set of indices $i=1,...,2^m-1$ for which $E_I(r+\frac{(i-1)(s-r)}{2^m},r+\frac{(i+1)(s-r)}{2^m})$ holds, and $B_m:=\bigcup_{i\in J_m}[r+\frac{(i-1)(s-r)}{2^m},r+\frac{(i+1)(s-r)}{2^m}]$. It is easy to check by definition that $B_m$ is a decreasing sequence of closed sets, thus there exists some point $t\in [r,s]$ which is covered by infinite members in $\{B_m\}_{m=0,1,...}$, therefore also infinite number of intervals forming $B_m, m=0,1,...$. Let $\{I_{m_j}=[a_{m_j},b_{m_j}]\}_{j=1,2,...}$ be such a sequence always covering $t$. Notice that $a_{m_j}\to t$ and $b_{m_j}\to t$ as $j\to\infty$. Moreover, $E_I(a_{m_j},b_{m_j})$ holds for all $j=1,2,...$ by construction. Thus by Lemma \ref{l:series} $t\in M_{f,I}$. Thus we have
$$
\{M_{f,I}\cap (r,s)\neq\phi\}\subseteq E(I,r,s)\subseteq \{M_{f,I}\cap[r,s]\neq\phi\},
$$
which implies
$$
E(I,r,s)\cup\{r\in M_{f,I}\}\cup\{s\in M_{f,I}\}=\{M_{f,I}\cap[r,s]\neq\phi\}.
$$
It is easy to check that $\{r\in M_{f,I}\}$ and $\{s\in M_{f,I}\}$ are measurable. $\{r\in M_{f,I}\}$, for example, can only happen if $r\in M_{f,I}\cap S_1(f)$, which is then equivalent to $\cup_{n=1}^\infty\cap_{m=n}^\infty E_I(r-\frac{1}{m},r+\frac{1}{m})$. As a result, $\{M_{f,I}\cap [r,s]\neq\phi\}$ is measurable for any $r,s$ in the interior of $[a,b]$. It is then trivial to see the measurability of $\{M_{f,I}\cap (a,s]\neq\phi\}$ for $s\in(a,b)$ or $\{M_{f,I}\cap (a,s)\neq\phi\}$ for $s=b$ by taking a countable union. The case for the two endpoints $a$ and $b$ can be checked directly. The measurability of $a\in M_{f,I}$, for instance, is verified once we observe that $a\in M_{f,I}\cap S_1(f)$ if and only if there exists a sequence $\{s_n\}_{n=1,2...}$, such that $s_n\uparrow a$ and $L_T(f,s_n)=a$ for $n=1,2,...$. $a\in M_{f,I}\cap S_2(f)$, of course, if and only if $L_T(f,a)=a$.

For the case of $I=[a,b]$ with $|I|= T$, the key is to notice that $L(f,I)=t\in(a,b)$ if and only if there exists a positive integer $n$ such that $L(f,I_{1n})=t$ or $L(f,I_{2n})=t$, where $I_{1n}=[a-\frac{1}{n}, b]$, $I_{2n}=[a,b+\frac{1}{n}]$, and $L(f,I_{1n})$ and $L(f,I_{2n})$ are defined as above for $|I|>T$. Thus for any $s\in(a,b)$,
$$
\{L(f,I)\in[a,s])\}=\{a\in M_{f,I}\}\cup\left(\bigcup_{n=1}^\infty\{L(f,I_{in})\in(a,s], i=1 \text{ or } 2\}\right)
$$
is measurable. The cases with $s=a$ or $s=b$ are not much different from before.

Finally if $I=[a,b]$ with $|I|<T$, $L(f,I)=t\in(a,b)$ is
equivalent to the existence of three points $x,y\in {\mathbb Q}$
and $z\in({\mathbb Q}\cap [b-T,a])\cup\{a,b-T\}$, such that
$L_T(f,x)=L_T(f,y)=L_T(f,z)=t$. It is not difficult to check this
equivalence. Intuitively, the existence of $x$ and $y$ assures
that $t\in S'(f)$, while the existence of $z$ guarantees the
maximality of $t$ in $I$. The countability of the rational set
then leads to the measurability. We skip the details.

Combining the three cases proves the measurability of $L(\cdot, I)$ for any compact interval $I$, as desired. $L$ is thus an intrinsic location functional. The last thing in the proof is therefore to show the relationship between $L_T(\BX,a)$ and $L(\BX,[a,a+T])$ claimed in the proposition.

Let $\BX$ be any stationary process with paths in $H$. Firstly,
assume $L(\BX,[a,a+T])=t\in(a,a+T)$ but $L_T(\BX,a)\neq t$. Then
$L_T(\BX,a)\notin S'(\BX)$, since otherwise $t$ and $L_T(\BX,a)$
are both in $S'(\BX)$, $|t-L_T(\BX,a)|<T$ and by definition of
$\preceq_0$, $t\preceq_0 L_T(\BX,a)$, contradicting the maximality
of $L(\BX,[a,a+T])$. By the same reasoning, if
$L_T(\BX,a)\in(a,a+T)$ but $L(\BX,[a,a+T])\neq L_T(\BX,a)$ then
$L_T(\BX,a)\notin S'(\BX)$. Together, we have
$$
\{L_T(\BX,a)\neq L(\BX,[a,a+T]),
$$
$$
L_T(\BX,a)\in(a,a+T)\text{ or }L(\BX,[a,a+T])\in(a,a+T)\}
$$
$$
\subseteq \{L_T(\BX,a)\notin S'(\BX)\}.
$$

Notice that if $L_T(\BX, a)=a$ or $L_T(\BX,a)=a+T$, then
$L_T(\BX,a)\in S'(\BX)$ automatically. By the definition of
$S'(X)$, $L_T(\BX,a)\notin S'(\BX)$ if and only if $L_T(\BX,a)\in
(a,a+T)$ and $L_T(\BX,a)\neq L_T(\BX,b)$ for any $b\neq a$, which
is equivalent to $L_T(\BX,a)\neq L_T(\BX,b)$ for any $b\neq a$,
$b\in \mathbb Q$ by Lemma \ref{l:monotonicity}. Thus
$\{L_T(\BX,a)\notin S'(\BX)\}$ is measurable. Now we show that
${\mathbb P}(L_T(\BX,a)\notin S'(\BX))=0$. Assume ${\mathbb
P}(L_T(\BX,a)\in (a,a+T)\setminus S'(\BX))>0$. Then there exists
$\Delta>0$, such that ${\mathbb P}(L_T(\BX,a)\in
(a+\Delta,a+T-\Delta)\setminus S'(\BX))=:\delta>0$. Take
$\epsilon<\Delta/(\lfloor 1/\delta\rfloor)$ and compact intervals
$I_i=[a+i\epsilon, a+i\epsilon+T]$ for $i=0,1,...,\lfloor
1/\delta\rfloor$, where ``$\lfloor\cdot\rfloor$'' refers to the
largest integer smaller or equal to the argument. By construction,
for any $i,j=0,1,...,\lfloor 1/\delta\rfloor$, $I_i\cap I_j\supset
[a+i\epsilon+\Delta, a+i\epsilon+T-\Delta]\cup[a+j\epsilon+\Delta,
a+j\epsilon+T-\Delta]$. This, however, implies that the events
$E_i:=\{L_T(\BX,a+i\epsilon)\in
(a+i\epsilon+\Delta,a+i\epsilon+T-\Delta)\setminus S'(\BX)\}$ must
be disjoint for different $i$. Otherwise, suppose $E_i$ and $E_j$
holds for some $i<j$. Then since both $L_T(\BX,a+i\epsilon)$ and
$L_T(\BX,a+j\epsilon)$ are in the intersection of $I_i$ and $I_j$,
they must be equal. Lemma \ref{l:monotonicity} then implies that
$L_T(\BX,a')=L_T(\BX,a+i\epsilon)$ for all
$a'\in[a+i\epsilon,a+j\epsilon]$. This contradicts with $E_i$,
which requires that $L_T(\BX,a+i\epsilon)\notin S'(\BX)$. By
stationarity, ${\mathbb P}(E_i)={\mathbb P}(L_T(\BX,a)\in
(a+\Delta,a+T-\Delta)\setminus S(\BX))=\delta, i=0,1,...,\lfloor
1/\delta\rfloor$. Then
$$
{\mathbb P}(\bigcup_{i=0}^{\lfloor 1/\delta\rfloor} E_i)=\delta\cdot \left(\lfloor 1/\delta\rfloor+1 \right)>1,
$$
which clearly shows a contradiction. As a result,
${\mathbb P}(L_T(\BX,a)\notin S'(\BX))=0$ and the proof of the
proposition is complete.
\end{proof}

The importance of Proposition \ref{prop:almost.same} resides in
the fact that most of the distributional properties of intrinsic
location functionals proved in \cite{samorodnitsky:yi:2013b} can
now be transformed automatically to local intrinsic location
functionals. In particular, local intrinsic location functionals
always satisfy the total variation constraints. Thus the
equivalence between the stationarity, the total variation
constraints and the shift invariance of the distributions can be
extended to local intrinsic location functionals.

\begin{corollary}\label{cor:equiv.lilf.fixT}
Let $\BX$ be a stochastic process with continuous paths. Let ${\mathcal L}_{loc,T}$ be the set of all local intrinsic location functionals in ${\mathcal C}(\mathbb R)$ with related length $T$. Then the followings are
equivalent:
\begin{enumerate}
\begin{item}
The process $\BX$ is stationary.
\end{item}
\begin{item}
For any $T>0$, any local intrinsic location functional $L_T\in{\mathcal
L}_{loc,T}$, the distribution of $L_T(\BX,a)-a$ does not depend on $a$.
\end{item}
\begin{item}
For any $T>0$, any local intrinsic location functional $L_T\in{\mathcal L}_{loc,T}$
and any $a\in\mathbb R$, $L_T(\BX,a)$ admits a density function
$f_{\BX,a,T}(t)$ in $(a,a+T)$, which satisfies the total variation
constraint on $[a,a+T]$.
\end{item}
\end{enumerate}
\end{corollary}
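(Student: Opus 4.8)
The plan is to establish the three-way equivalence by exploiting the machinery already developed, rather than reproving anything from scratch. The central observation is that Corollary \ref{cor:equiv.lilf.fixT} should follow almost entirely from Theorem \ref{t:equivalence} (the corresponding equivalence for intrinsic location functionals) together with Proposition \ref{prop:almost.same}, which bridges local intrinsic location functionals and genuine intrinsic location functionals on the interior of any interval of length $T$. I would prove the cycle $(1)\Rightarrow(2)\Rightarrow(3)\Rightarrow(1)$.

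For the direction $(1)\Rightarrow(2)$, I would argue directly from the definition of a local intrinsic location functional. If $\BX$ is stationary, then by the shift-compatibility condition (condition (3) in Definition \ref{defn:lilf}), for any $c\in\mathbb R$ we have $L_T(\BX,a)-a \eqd L_T(\theta_c\BX, a-c)-(a-c)$, and stationarity gives $\theta_c\BX\eqd\BX$; choosing $c$ appropriately shows the law of $L_T(\BX,a)-a$ is independent of $a$. For $(1)\Rightarrow(3)$, the cleanest route is to invoke Proposition \ref{prop:almost.same}: it furnishes an intrinsic location functional $L$ agreeing almost surely with $L_T$ on the open interval $(a,a+T)$ for every stationary $\BX$. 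Since $\BX$ is stationary, Theorem \ref{t:density.rcll} (or equivalently the implication $(1)\Rightarrow(3)$ of Theorem \ref{t:equivalence}) tells us that the law of $L(\BX,[a,a+T])$ is absolutely continuous on $(a,a+T)$ with a density satisfying the total variation constraints. Because $L_T(\BX,a)$ and $L(\BX,[a,a+T])$ coincide almost surely whenever either lands in $(a,a+T)$, their restrictions to the open interval have the same law, so $L_T(\BX,a)$ inherits the same absolutely continuous density with the same total variation constraints.

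The implication $(3)\Rightarrow(1)$ is where the real content lies, and I expect it to be the main obstacle. The strategy is again to transfer the problem to intrinsic location functionals and then apply the existing $(3)\Rightarrow(1)$ direction of Theorem \ref{t:equivalence}. The issue is that condition (3) of the corollary only asserts the total variation constraint for \emph{local} functionals with a fixed related length $T$, whereas Theorem \ref{t:equivalence}(3) needs the constraint for all intrinsic location functionals over arbitrary intervals. I would handle this by taking an arbitrary intrinsic location functional $L$ and, for each fixed $T>0$, forming its restriction $L_T(f,a):=L(f,[a,a+T])$, which is a local intrinsic location functional by the first Example following Definition \ref{defn:lilf}. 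Condition (3) then guarantees that $L(\BX,[a,a+T])$ has a density satisfying the total variation constraint for every $T$ and every $a$, which is exactly condition (3) of Theorem \ref{t:equivalence}. Invoking that theorem yields stationarity of $\BX$.

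The delicate point to verify carefully in the $(3)\Rightarrow(1)$ step is that the family $\{L_T : L \text{ an intrinsic location functional}, T>0\}$ sweeps out enough test functionals: one must check that applying condition (3) of the corollary across all related lengths $T$ recovers the full hypothesis of Theorem \ref{t:equivalence}(3) for arbitrary intervals $[a,b]$, simply by setting $T=b-a$. This is straightforward once stated, but it is the logical linchpin, since it is the place where the restriction to a single interval length is lifted by quantifying over all lengths. I would also note in passing that no measurability subtleties arise here, because the restriction of a measurable intrinsic location functional is automatically a measurable local one, so we never need the nontrivial measurability analysis of Proposition \ref{prop:almost.same} in this direction—we only need that proposition for the forward implication $(1)\Rightarrow(3)$.
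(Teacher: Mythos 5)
Your overall strategy matches what the paper intends: the paper states this corollary without a displayed proof, as an immediate consequence of Proposition \ref{prop:almost.same} (to transfer the distributional properties of intrinsic location functionals to local ones for stationary processes) together with Theorem \ref{t:equivalence} (invoked via the restriction $L_T(f,a):=L(f,[a,a+T])$, quantified over all lengths $T$, to go back from local functionals to stationarity). Your implications $(1)\Rightarrow(2)$, $(1)\Rightarrow(3)$ and $(3)\Rightarrow(1)$ are each correct as argued, including the observations that Proposition \ref{prop:almost.same} yields equality of the laws restricted to the open interval $(a,a+T)$, and that restriction of a measurable intrinsic location functional raises no measurability issues.

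There is, however, a genuine logical gap in the structure of the argument: you announce the cycle $(1)\Rightarrow(2)\Rightarrow(3)\Rightarrow(1)$, but what you actually prove is $(1)\Rightarrow(2)$, $(1)\Rightarrow(3)$ and $(3)\Rightarrow(1)$. This establishes $(1)\Leftrightarrow(3)$ and $(1)\Rightarrow(2)$, but condition $(2)$ is never shown to imply anything, so a priori it could be strictly weaker than $(1)$ and $(3)$, and the three-way equivalence is not established. The fix is one line, using the same restriction trick you already deploy for $(3)\Rightarrow(1)$: assuming $(2)$, for any intrinsic location functional $L$ and any $\Delta>0$, the mapping $L_\Delta(f,a):=L(f,[a,a+\Delta])$ is a local intrinsic location functional with related length $\Delta$, so by $(2)$ the law of $L(\BX,[a,a+\Delta])-a$ does not depend on $a$; this is exactly condition $(2)$ of Theorem \ref{t:equivalence} (which moreover only requires \emph{some} $\Delta>0$), whence $\BX$ is stationary. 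With that implication $(2)\Rightarrow(1)$ added, your proof is complete and coincides in substance with the paper's intended argument.
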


\begin{remark}{\rm
A closer examination of the proof of the equivalence theorem in
\cite{samorodnitsky:yi:2013b} shows that the length of the
interval does not play any crucial role in the proof of the
equivalence between (1) and (2). As a result, $(2)$ in Corollary
\ref{cor:equiv.lilf.fixT} is also equivalent to:

$(2')$ For a fixed $T>0$, any local intrinsic location functional
$L_T\in{\mathcal L}_{loc,T}$, the distribution of $L_T(\BX,a)-a$
does not depend on $a$.}
\end{remark}

To sum up, while the equivalence between the stationarity and the
total variation constraints of the intrinsic location functionals
have been established in \cite{samorodnitsky:yi:2013b}, we just
extended this result to local intrinsic location functionals,
which is more generally defined compared to intrinsic location
functionals. Moreover, the local intrinsic location functionals
are further identified with the shift-compatible ordered sets of
points $(S(\cdot),\preceq)$ on $\mathbb R$ as path functionals.
Such an identification provides a particularly convenient way to
define local intrinsic location functionals.

We complete this section by the following corollary, which
examines the relation between intrinsic location functionals and
local intrinsic location functionals, from the perspective of the
partially ordered sets they correspond to. The proof is easy and omitted.

\begin{corollary}
Let $H$, $\mathcal I$ be defined as before. Let $L$ be an intrinsic location functional, then $L_T: H\times {\mathbb R}\to{\mathbb R}\cup\{\infty\}$ defined by $L_T(f,a)=L(f,[a,a+T])$ is a local intrinsic location functional for each $T>0$. If $(S(\cdot),\preceq_1)$ and $(S_T(\cdot), \preceq_T)$ are the minimal ordered random set representations for $L$ and $L_T$ respectively, then for $f\in H$, $S_T(f)\subseteq S(f)$, and $t_1,t_2\in S_T(f), t_1\preceq_T t_2$ implies $t_1\preceq_1 t_2$ in $S(f)$.

On the other hand, let $\{L_T\}_{T>0}$ be a family of local intrinsic location functionals, with minimal ordered random set representations $\{(S_T(\cdot),\preceq_T)\}_{T>0}$. Then there exists an intrinsic location functional $L$ such that $L(f,I)=L_{b-a}(f,a)$ for any $I=[a,b]\in \mathcal I$ and any $f\in H$, if and only if their exists a partially ordered random set $(S(\cdot),\preceq_1)$ satisfying the same properties as in condition (2) in Corollary \ref{cor:order.representation}, such that for any $T>0$, $f\in H$, $S_T(f)\subseteq S(f)$, and $t_1,t_2\in S_T(f), t_1\preceq_T t_2$ implies $t_1\preceq_1 t_2$ in $S(f)$.
\end{corollary}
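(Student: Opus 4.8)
The plan is to turn the whole statement into a comparison of partial orders, by leaning on the explicit minimal representations furnished by Corollary \ref{cor:minimal.representation} and on the representation criterion of Corollary \ref{cor:order.representation}, rather than manipulating the functionals directly. Recall the explicit forms: for the intrinsic location functional $L$ one has $S(f)=\{t:\,L(f,I)=t\text{ for some }I\in\mathcal I\}$ with $\preceq_1$ generated by the relations ``$s_1\in I$ and $L(f,I)=s_2$'', while for $L_T(f,a)=L(f,[a,a+T])$ one has $S_T(f)=\{t:\,L_T(f,a)=t\text{ for some }a\}$ with $\preceq_T$ the smallest partial order generated by ``$s_1\in[a,a+T]$ and $L_T(f,a)=s_2$''. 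For the first assertion, since every value $L_T(f,a)=L(f,[a,a+T])$ is also a value $L(f,I)$ with $I=[a,a+T]$, the inclusion $S_T(f)\subseteq S(f)$ is immediate; and every generating pair of $\preceq_T$ (with $s_1,s_2\in S_T(f)$, $s_1\in[a,a+T]$, $L(f,[a,a+T])=s_2$) is a generating pair of $\preceq_1$, so since $\preceq_1$ is already transitive the smallest partial order $\preceq_T$ containing them is contained in $\preceq_1$, giving $t_1\preceq_T t_2\Rightarrow t_1\preceq_1 t_2$. The ``only if'' direction of the equivalence then drops out: if an intrinsic location functional $L$ with $L(f,I)=L_{b-a}(f,a)$ exists, take $(S,\preceq_1)$ to be its minimal representation; because $L(f,[a,a+T])=L_T(f,a)$, the local functional appearing in the first assertion is exactly the given $L_T$, and the required inclusion and order-preservation hold for each $T$.

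For the ``if'' direction I would construct $L$ directly from the given global poset. By hypothesis $(S(\cdot),\preceq_1)$ satisfies the properties of condition (2) of Corollary \ref{cor:order.representation}, so setting $L(f,I)$ equal to the unique $\preceq_1$-maximal element of $S(f)\cap I$ when this set is nonempty, and $\infty$ otherwise, yields an intrinsic location functional by that corollary. What then remains is the single identity $L(f,[a,a+T])=L_T(f,a)$ for all $f$, $a$, $T>0$. Writing $I=[a,a+T]$, this amounts to showing that the $\preceq_1$-maximal element of $S(f)\cap I$ coincides with the $\preceq_T$-maximal element of $S_T(f)\cap I$, with the two intersections being empty simultaneously. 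The inclusion $S_T(f)\subseteq S(f)$ together with $\preceq_T\subseteq\preceq_1$ already gives the easy half, namely that $L_T(f,a)$ lies in $S(f)\cap I$ and $\preceq_1$-dominates every point of $S_T(f)\cap I$.

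The main obstacle is this last verification, because it requires that the global order compute the length-$T$ window correctly from the local data. Concretely, one must rule out that some point of $(S(f)\setminus S_T(f))\cap I$ is strictly $\preceq_1$-larger and so becomes $L(f,I)$, and, in the degenerate case $S_T(f)\cap I=\emptyset$, that $S(f)\cap I$ carries no $\preceq_1$-maximal element so that $L(f,I)=\infty$ as well. My approach would be to pass to the minimal admissible sub-poset and then invoke uniqueness of the minimal representation (Corollary \ref{cor:minimal.representation}): the local functional induced by the constructed $L$ at length $T$ has a minimal representation that, by the first assertion applied to $L$, embeds into $(S,\preceq_1)$; one wants to match it against the embedding of $(S_T,\preceq_T)$ and conclude the two local functionals agree. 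The genuinely delicate step — and the place where the defining axiom (4) of Definition \ref{defn:lilf} and the shift-compatibility of the orders must be used — is showing that no ``foreign'' point can outrank the true length-$T$ maximum over $I$; once that is established, the remaining identifications are bookkeeping.
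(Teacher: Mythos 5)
The paper itself gives no argument here (it declares the proof ``easy and omitted''), so your proposal has to be judged on its own merits. Your first paragraph and the ``only if'' half of the equivalence are correct and are surely the intended argument: with the explicit minimal representations of Corollary \ref{cor:minimal.representation}, the inclusion $S_T(f)\subseteq S(f)$ is immediate, every generating pair of $\preceq_T$ is a relation of $\preceq_1$, and transitivity of $\preceq_1$ absorbs the transitive closure.

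The ``if'' half, however, contains a genuine gap, and under the reading of the hypothesis you adopt it cannot be closed, because that version of the statement is false. You read the hypothesis as: $(S,\preceq_1)$ is shift-compatible, every nonempty $S(f)\cap I$ has a unique maximal element, and the embedding conditions hold. Counterexample: take $H=\mathcal{C}(\mathbb R)$, let $L_1(f,a)=\inf\{t\in[a,a+1]:f(t)=0\}$ (a local intrinsic location functional, being the restriction of the first-hitting-time functional), and let $L_T\equiv\infty$ for every $T\neq 1$ (trivially a local intrinsic location functional, with $S_T(f)=\emptyset$). The minimal representation of $L_1$ is $S_1(f)=f^{-1}(0)$ with a sub-relation of the inverse linear order. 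Now take $S(f)=f^{-1}(0)$ with the full inverse order ($s_1\preceq_1 s_2$ iff $s_2\le s_1$): this poset is shift-compatible, every nonempty $S(f)\cap I$ with $I$ compact has a unique maximal element (the leftmost zero, since $f^{-1}(0)\cap I$ is compact), and it contains every $S_T(f)$ with the orders embedded. Yet no intrinsic location functional $L$ can satisfy $L(f,[a,b])=L_{b-a}(f,a)$: for any $f$ having a zero, $L(f,[a,a+1])$ would be finite while $L(f,[a,a+2])=L_2(f,a)=\infty$, violating consistency of existence, condition (5) of Definition \ref{de:ILF}. In this example your constructed $L$ is exactly the first-hitting-time functional, which simply disagrees with $L_2$; so the step you defer as ``bookkeeping'' (ruling out foreign maximal points and mismatched emptiness via minimality and uniqueness) is not delicate but impossible.

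The statement is true---and the proof genuinely easy, which is why the paper omits it---only under the stronger reading of ``satisfying the same properties as in condition (2)'': the clause of condition (2) must be imported with $L(f,I)$ replaced by $L_{b-a}(f,a)$, i.e.\ the single poset is required to \emph{represent the family}: $S(f)\cap[a,b]=\emptyset$ forces $L_{b-a}(f,a)=\infty$, and otherwise $L_{b-a}(f,a)$ is the $\preceq_1$-maximal element of $S(f)\cap[a,b]$. Under that reading, the identity you singled out as the main obstacle is part of the hypothesis rather than something to prove: define $L(f,I)$ as the $\preceq_1$-maximal element of $S(f)\cap I$ (or $\infty$), the identity holds by assumption, measurability of $L(\cdot,[a,b])$ is inherited from that of $L_{b-a}(\cdot,a)$, and Corollary \ref{cor:order.representation} then certifies that $L$ is an intrinsic location functional. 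The embedding clauses are automatic under this reading, by exactly your first-paragraph argument, so they are not what carries the equivalence. Your write-up should fix the interpretation of the hypothesis first; as it stands, the ``if'' direction is unproven and, as read, unprovable.
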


\section{Path characterization}

Let $L_T$ be a local intrinsic location functional with related
length $T$. Given any $f\in H$, define $g(x):=L_T(f,x)-x, \forall
x\in \mathbb R$. Thus $g(x)$ is the distance between $L_T$
and the starting point $x$ of the interval of interest. Notice that since $L_T$ can take value infinity, $g(x)$ can also be infinity. The following result gives
out a characterization of the function $g$. In another
word, it answers the question how we can tell whether a random location is a local intrinsic location functional by looking at the change of its place in the interval as the interval shifts over the real line.

We call a partition satisfying certain property the ``roughest'',
if all the other partitions satisfying the same property is a
refinement of the given partition.

\begin{theorem}\label{thm:path}
Let $L_T$ be a local intrinsic location functional discussed
before, and $g$ be the function defined above. Then for any $f\in
H$, there exists a roughest partition of the real line by
intervals (the intervals can be degenerated, and the boundaries of
the intervals can be open or closed), such that for any member
$I=(a,b),(a,b], [a,b)$ or $[a,b]$ of this partition, exactly one
of the followings is true.

(1) $b-a\leq T$, and $g(x)=d-x$ for some $d\in[b,a+T]$ and all
$x\in I$ .

(2) $g(x)=\infty$ for all $x\in I$.

Moreover, if $g(a)\neq T$ (resp. $g(b)\neq 0$), then
$\lim_{x\uparrow a}g(x)=0$ (resp. $\lim_{x\downarrow b}g(x)=T)$.
If $I$ is open on $a$ (resp. $b$), then $g(a)=0$ (resp. $g(b)=T$).

On the other hand, let $L_T$ be a mapping from $H\times \mathbb R$
to ${\mathbb R}\cup\{\infty\}$ such that $L_T(\cdot, a)$ is
measurable for any $a\in \mathbb R$, and $L_T(f,a)=L_T(\theta_c
f,a-c)+c$ for any $a,c\in\mathbb R$. If for any function $f\in H$,
there always exists a partition $P$ of the real line by intervals
satisfying the properties listed above, then $L_T$ is a local
intrinsic location functional with related length $T$.
\end{theorem}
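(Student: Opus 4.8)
The theorem is an equivalence. The forward direction—that the $g$ attached to a genuine local intrinsic location functional admits such a partition—follows by taking the maximal intervals on which $L_T(f,\cdot)$ is constant or equals $\infty$ (these are intervals by Lemma~\ref{l:monotonicity}) and reading the boundary clauses off the monotonicity and the defining properties, so I concentrate on the converse, which is the substantive direction. For the converse, conditions (1) and (3) of Definition~\ref{defn:lilf} are hypotheses, so I only need the range condition (2) and condition (4). Throughout I fix $f$, abbreviate $L_T(f,\cdot)$, and recall $g(x)=L_T(f,x)-x$. On a case (1) member with datum $d$ and endpoints $\alpha\le\beta$, the relation $g(x)=d-x$ with $d\in[\beta,\alpha+T]$ and $x\in[\alpha,\beta]$ gives $g(x)\in[0,T]$, hence $L_T(f,x)\in[x,x+T]$; on a case (2) member $L_T(f,x)=\infty$. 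Since the partition exhausts $\mathbb{R}$, condition (2) holds, and all the difficulty is in (4).

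The engine is a local analysis at each junction point $p$ shared by two consecutive members, and the \emph{open-endpoint} clauses are exactly what pin down the behaviour there. The point $p$ belongs to precisely one of the two members. If $p$ lies in the left member $I^-$, then the right member is open at $p$ and its open-left clause forces $g(p)=0$; since $g(p)=\infty$ is then impossible, $I^-$ is a case (1) member with constant value equal to $p$ (its own right endpoint). Symmetrically, if $p$ lies in the right member $I^+$, the left member's open-right clause forces $g(p)=T$, so $I^+$ is case (1) with value $p+T$ at its left endpoint $p$. Reading this off, I will record the \textbf{junction dictionary}: two case (2) members are never adjacent; a case (1) member abutting a case (2) member on its right has value equal to its right endpoint; and one abutting a case (2) member on its left has value equal to its left endpoint plus $T$. (The same clauses show a standalone degenerate member $\{p\}$ is incompatible with the hypotheses, its two flanking members forcing $g(p)=0$ and $g(p)=T$ at once, so such members do not arise.)

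Next I re-derive, from the partition rather than from the conclusion, the analogue of Lemma~\ref{l:monotonicity}(i): if $x_1<x_2$ and $L_T(f,x_1),L_T(f,x_2)$ are finite, then $L_T(f,x_1)\le L_T(f,x_2)$. Across two adjacent members with finite values the left value does not exceed the right (in the first junction type the left value equals $p$ and the right is at least its left endpoint $p$; in the second the right value equals $p+T$ while the left is at most its own left endpoint plus $T$, hence at most $p+T$), and across a case (2) gap $(p_L,p_R)$ the finite value just to the left equals $p_L$ while the one just to the right equals $p_R+T\ge p_L$. Chaining over the members between $x_1$ and $x_2$ gives the claim.

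Finally, condition (4). Fix $a,b$ with $t:=L_T(f,a)\in[b,b+T]$, so $t$ is finite. First I rule out $L_T(f,b)=\infty$: were it infinite, $b$ would lie in a case (2) gap $(p_L,p_R)$ whose flanking members carry values $p_L$ and $p_R+T$; placing the value-$t$ member to the left gives $t\le p_L<b$ and to the right gives $t\ge p_R+T>b+T$ (by monotonicity), each contradicting $t\in[b,b+T]$. Hence $s:=L_T(f,b)$ is finite, and it remains to exclude $s\ne t$ with $s\in[a,a+T]$. Then $s,t$ both lie in the overlap $[a,a+T]\cap[b,b+T]$; as this configuration is symmetric in $(a,t)$ and $(b,s)$, I may assume $t<s$, whereupon monotonicity forces $a<b$, the overlap is $[b,a+T]$, and $b\le t<s\le a+T$. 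Let $\beta_a$ be the right endpoint of the value-$t$ member $I_a\ni a$; a short check using $b\notin I_a$ and $b>a$ gives $\beta_a<b\le t$, so the junction dictionary forces $\beta_a\notin I_a$ and the successor member to have value $\beta_a+T$. Comparing this with $s=L_T(f,b)$ through monotonicity, and using $s\le a+T$ together with $\beta_a\ge a$, forces $\beta_a\le a$, contradicting $a\in I_a$ while $\beta_a\notin I_a$. This contradiction establishes (4), so $L_T$ is a local intrinsic location functional. The step I expect to be hardest is the junction dictionary of the second paragraph: the open/closed-endpoint and one-sided-limit clauses are terse, and extracting from them the exact value forced on each side of every junction—including degenerate points and accumulation of members—is the delicate part; once it is in hand, monotonicity and both halves of (4) reduce to placing the relevant member to the left or right of $b$ and reading off an inequality.
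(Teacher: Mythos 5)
Your forward direction is fine in outline (maximal intervals of constancy via Lemma~\ref{l:monotonicity}, boundary behaviour from condition (4); the paper does the same thing through the ordered-set representation), but the converse — which you rightly identify as the substantive direction — has a genuine gap: the entire ``junction dictionary'' presupposes that every member of the partition has an immediate neighbour sharing an endpoint with it. Nothing in the hypotheses guarantees this, because members are allowed to \emph{accumulate}. Concretely, take $S(f)=\{s_j\}\cup\{t_j\}$ with $s_j\uparrow x_0$ and $t_j\downarrow x_0+T$ (all gaps smaller than $T$), ordered within each family so that the point with smallest index dominates; the resulting local intrinsic location functional is the first-hitting functional of $S(f)$, and its partition consists of the members $(s_{j-1},s_j]$ with $d=s_j$, the \emph{degenerate} member $\{x_0\}$ on which $g=\infty$, and the members $[t_j-T,t_{j-1}-T)$ with $d=t_j$. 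One checks that every limit clause and open-endpoint clause of the theorem is satisfied, yet $\{x_0\}$ has no flanking member on either side: infinitely many members accumulate at it. This one configuration defeats three steps of your argument simultaneously. Your parenthetical claim that standalone degenerate members ``do not arise'' is false (the theorem statement explicitly permits degenerate intervals, and the paper's forward proof notes that $a=b$ is possible precisely in this situation); your monotonicity proof by ``chaining over the members between $x_1$ and $x_2$'' cannot be executed when infinitely many members lie between them and there is no ``next'' member to step to; and your exclusion of $L_T(f,b)=\infty$ in condition (4), which reads values off the two members flanking the case-(2) gap, has no flanking members to read from.

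The repair is essentially forced, and it is what the paper does: never invoke adjacency, and work instead with the one-sided limit clauses, which are hypotheses of the theorem and hold at an endpoint whether or not a neighbouring member exists there. The paper first isolates Lemma~\ref{l:slow.decrease} — for $x<y$ with both values finite, $g(x)-g(y)\le y-x$, with equality if and only if $x$ and $y$ lie in the same maximal member — and then runs condition (4) by comparing $g(y_2)$ against $\lim_{y\downarrow b_1}g(y)=T$ (to rule out $L_T(f,y_1)=\infty$, where $b_1$ is the right endpoint of the maximal $\infty$-member containing $y_1$) and against $\lim_{y\uparrow a_2}g(y)=0$ (to force $L_T(f,y_1)<y_2$, where $a_2$ is the left endpoint of the maximal member containing $y_2$). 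Your dictionary entries are all correct \emph{whenever} two members happen to be adjacent, so your argument is valid on locally finite partitions; to make it general you would have to replace every phrase of the form ``the value carried by the flanking member'' by ``the one-sided limit of $g$ at the endpoint'', and re-derive monotonicity by a limit or supremum argument rather than member-by-member stepping — at which point you have reconstructed the paper's proof.
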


Roughly speaking, Theorem \ref{thm:path} tells us that the
function $g$ consists of linear pieces with slope $-1$ and
intervals with value $\infty$. The pieces are combined together
following the rule that when the interval $[x,x+T]$ shifts along
the real line, a location can ``disappear'' in the interior of the
interval only if it is replaced by another location appearing at
the right endpoint $x+T$. Symmetrically, a location can only
``appear'' in the interior of the interval only if it is replacing
another location disappearing at the left endpoint $x$. The actual
scenario is a little bit more complicated, since both the replaced
and replacing ``location'' can be indeed the limit of a sequence
of locations, where comes the limits in the formulation of the
theorem.

\begin{proof}
Let $L_T$ be a local intrinsic location functional with related
length $T$. By Theorem \ref{thm:order.representation2} and Remark
\ref{r:generalization}, for each $f\in H$, there is a set
$S(f)\subseteq \mathbb R$ and a partial order $\preceq$ on it,
satisfying $S(\theta_c f)=S(f)-c$ and $t_1\preceq t_2$ in $S(f)$
if and only if $t_1-c\preceq t_2-c$ in $S(\theta_c f)$ for any
$c\in\mathbb R$, such that $L_T(f,x)$ is the unique maximal
element by $\preceq$ in $S(f)\cap [x,x+T]$ for any $f\in H$ and
any $x\in\mathbb R$, provided that it exists. For any fixed
$x\in\mathbb R$, there are two cases. Case 1: $S(f)\cap [x,
x+T]=\phi$. In this case define $a=\sup\{S(f)\cap(-\infty, x)\}$
and $b=\inf\{S(f)\cap(x+T,\infty)\}-T$. Then $a,b$ are clearly the
two boundaries of the largest interval containing $x$ on which
$L_T(f,\cdot)=\infty$. Notice that it is possible to have $a=b$,
in which case the interval becomes degenerate. Case 2: $S(f)\cap
[x,x+T]\neq \phi$. In this case define
$a=\max\{L_T(f,x)-T,\sup\{y\in{\mathbb R}: y\in S(f), y<L_T(f,x),
L_T(f,x)\preceq y\}\}$ and $b=\min\{L_T(f,x) ,\inf\{y\in{\mathbb
R}: y\in S(f), y>L_T(f,x), L_T(f,x)\preceq y\}-T\}$. Then
$L_T(f,x)$ will remain the same when and only when $x$ moves
between $a$ and $b$. That is, $L_T(f,y)=L_T(f,x)$ for $y\in I$,
$I=[a,b],[a,b),(a,b]$ or $(a,b)$, whether the boundary is closed
or open being determined by which one is larger/smaller in the max
and min in the definition of $a$ and $b$, and whether the supremum
and infimum are achieved by a single point or only by a sequence
of points. As a result, for any $y\in I$,
$g(y)=L_T(f,y)-y=L_T(f,x)-y=d-y$ where $d:=L_T(f,x)\in\cap_{y\in
I} [y,y+T]\subseteq [b,a+T]$. Thus case 2 corresponds to scenario
(1) and case 1 corresponds to scenario (2) in Theorem
\ref{thm:path}.

Next we check the combination rule, that is, the sentence below
the two scenarios in the theorem. Firstly assume $g(a)\neq T$.
Hence either $g(a)<T$ or $g(a)=\infty$. If $g(a)<T$, consider
$g(x)$ for $x\in(a-T+g(a),a)$. Notice that $x+T>a+g(a)=L_T(f,a)$.
However, $L_T(f,x)$ can not be equal to $L_T(f,a)$, since
otherwise by Lemma \ref{l:monotonicity} $a$ will not be the left
endpoint of a largest interval on which $g(\cdot)$ is linear.
Hence $L_T(f,x)\in[x,x+T]\backslash [a,a+T]=[x,a)$. Since $x$ can
be arbitrarily close to $a$, this implies $g(x)\to 0$ as
$x\uparrow a$.

The argument for the possibility $g(a)=\infty$ is similar. For any
$x<a$, $L_T(f,x)\in[x,a)$ or $L_T(f,x)=\infty$. The last instance,
however, is not possible when $x>a-T$, since otherwise by Lemma
\ref{l:monotonicity} the interval $I$ will not be the largest
interval on which $g$ is $\infty$. Thus $L_T(f,x)\in[x,a)$, which
then implies that $g(x)\to 0$ as $x\uparrow a$.

In the same spirit, we can show that if $I$ is open at $a$, then $g(a)=0$. Assume it is not the case. Then $g(a)=\infty$ or $0<g(a)\leq T$. If $g(a)=\infty$ and $g(x)$ is also infinity on $(a,b)$ or $(a,b]$, the maximality of the interval $I$ is violated; if $g(x)=d-x$ for any $x\in I$ and some $d\in [b,a+T]$, then $L_T(f,x)=d \in[a,a+T]$, which contradicts with $L_T(f,a)=g(a)+a=\infty$ according to the definition of local intrinsic location functional. Hence we must have $0<g(a)\leq T$. Consider a point $s\in(a,\min(a+g(a),b))$. $L_T(f,s)=d\in[b,a+T]\subseteq [s,a+T]$. However $L_T(f,a)=a+g(a)\in[s,a+T]$, thus $L_T(f,s)=L_T(f,a)$, contradicting with the openness of $I$ on $a$. Therefore both of the two possibilities fail and $g(a)$ must take value $0$.

Now let us turn to the other direction of the proof. The
measurability and shift invariance are already given. The value
range $L_T(f,a)\in[a,a+T]\cup\{\infty\}$ for any $f\in H$ and
$a\in\mathbb R$ is easy to check. It remains to show condition (4) in
Definition \ref{defn:lilf}. Before we proceed, notice that the
combination rule leads to the following fact:

\begin{lemma}\label{l:slow.decrease}
Let $g: {\mathbb R}\to{\mathbb R}\cup\{\infty\}$ be a function
satisfying the combination rule. Then for $x,y\in{\mathbb R}, x<y$
satisfying $g(x)\neq \infty$ and $g(y)\neq\infty$, $g(x)-g(y)\leq
y-x$. The equality holds if and only if $x$ and $y$ are in the
same maximal interval in Theorem \ref{thm:path}. Equivalently, let
$L_T(t)=g(t)+t$ for $t\in\mathbb R$, then for $x,y\in{\mathbb R},
x<y$ satisfying $L_T(x)\neq \infty$ and $L_T(y)\neq\infty$,
$L_T(x)\leq L_T(y)$. The equality holds if and only if $x$ and $y$
are in the same maximal interval in Theorem \ref{thm:path}.
\end{lemma}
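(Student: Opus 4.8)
The plan is to carry everything through the equivalent $L_T$-formulation, since $L_T(t)=g(t)+t$ turns the asserted bound $g(x)-g(y)\le y-x$ into the monotonicity $L_T(x)\le L_T(y)$ and turns the equality case into $L_T(x)=L_T(y)$. I would first dispose of the easy implications: on a type-(1) member of the partition $L_T$ is the constant $d$, so if $x$ and $y$ lie in the same maximal interval then $L_T(x)=L_T(y)$, giving one direction of the equality characterization, while a type-(2) member is excluded since there $g=\infty$. Everything then reduces to the single claim that \emph{if $x<y$ have finite values and lie in distinct maximal intervals, then $L_T(x)<L_T(y)$ strictly}, since this simultaneously yields $L_T(x)\le L_T(y)$ in general and forces the equality case into a common interval.

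The heart of the argument is a local analysis of how $L_T$ behaves across a boundary of the partition, driven entirely by the combination rule (we may use neither condition (4) of Definition \ref{defn:lilf} nor Lemma \ref{l:monotonicity}, as those are precisely what this direction is establishing). On a type-(1) interval with endpoints $a,b$ and value $d$ one has $d\in[b,a+T]$, hence $d\le b+T$; so whenever $g$ is positive at a right endpoint $b$ the combination rule gives $\lim_{x'\downarrow b}L_T(x')=b+T\ge d$, i.e.\ $L_T$ moves up. I would then check the transition types: across two adjacent type-(1) intervals (splitting into $p\in I_-$ and $p\in I_+$ and using the rules $g(b)\neq0\Rightarrow\lim=T$ and ``open at $b\Rightarrow g(b)=T$''), and across a type-(1)$\to$type-(2) or type-(2)$\to$type-(1) junction, where the impossibility of a limit $T$ or $0$ meeting an $\infty$-plateau forces $g=0$ at the exiting right endpoint and $g=T$ at the entering left endpoint, so the surrounding type-(1) values satisfy $d_-=b_-<a_++T=d_+$. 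In each case the finite value of $L_T$ increases strictly, and this also shows that no $\infty$-plateau can lie strictly between two equal finite values.

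The remaining, genuinely delicate point is to pass from this local ``no downward jump'' behaviour to the global statement without assuming that only finitely many members of the partition lie between $x$ and $y$, since the members may accumulate; I would handle this with a supremum argument rather than a chain. Assuming $d_x:=L_T(x)>d_y:=L_T(y)$, observe $d_x>d_y\ge y$ (as $L_T(y)\in[y,y+T]$), and set
$$
A=\{x'\in[x,y]:\ L_T(x'')\ge d_x\ \text{for all}\ x''\in[x,x']\}
$$
with the convention $\infty\ge d_x$, and $c=\sup A\in[x,y]$. Since $y\in A$ would give $L_T(y)\ge d_x>d_y$, at $c$ either $L_T(c)<d_x$ or there are $x'\downarrow c$ with $L_T(x')<d_x$. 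In the first case $c$ is the left endpoint of its (type-(1)) maximal interval and the combination rule yields $\lim_{x'\uparrow c}L_T(x')=c$ (so $c\ge d_x$) or $g(c)=T$ (so the value is $c+T\ge x+T\ge d_x$); in the second the right-endpoint rule yields right-limit $c+T\ge d_x$ (if $g(c)\neq0$) or value $c\ge d_x$ (if $g(c)=0$). Each contradicts $c\le y<d_x\le x+T\le c+T$, so the drop is impossible, $A=[x,y]$, and we reach the contradiction, giving $L_T(x)\le L_T(y)$. The equality case is then closed by applying this monotonicity on $[x,y]$: constancy of $L_T$ there, together with the fact that the roughest partition never places two equal-valued type-(1) intervals adjacently (they would merge), forces $x$ and $y$ into one maximal interval. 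I expect this globalization step, and the careful open/closed endpoint bookkeeping it requires, to be the main obstacle.
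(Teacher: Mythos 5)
The paper gives no proof of this lemma (it is declared ``easy'' and omitted), so there is nothing to compare against; your proposal has to stand on its own. Its architecture is sound, and indeed more careful than the paper's dismissal suggests is needed: reducing to monotonicity of $L_T$, isolating the strict-increase claim across distinct maximal intervals, and globalizing by a supremum argument over $A=\{x'\in[x,y]:L_T(x'')\ge d_x\ \forall x''\in[x,x']\}$ is exactly the right way to handle the genuine possibility that maximal intervals accumulate (they can: e.g.\ $g\equiv 0$, whose roughest partition consists entirely of singletons). Your Case 1 (where $L_T(c)<d_x$) and the sub-case $g(c)=0$ of Case 2 are correct as written.

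However, the sub-case $g(c)\neq 0$ of Case 2 has a genuine gap. There you invoke the right-endpoint rule to get $\lim_{x'\downarrow c}L_T(x')=c+T$ and assert that this ``contradicts $c\le y<d_x\le x+T\le c+T$.'' It does not: from the points $x'_n\downarrow c$ with $L_T(x'_n)<d_x$ you can only conclude $c+T\le d_x$, while your chain gives $d_x\le c+T$; together these force $d_x=c+T$, i.e.\ $c=x$ and $g(x)=T$, which is not absurd. In this corner case ($L_T(x)=x+T$, with points of value $<x+T$ accumulating at $x$ from the right) values strictly below $d_x$ converging to $d_x$ are perfectly consistent with the right-limit being $c+T=d_x$, so your inequalities yield no contradiction at all. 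The case must instead be excluded structurally. For example: each dropping point $x'_n$ lies in a type-(1) maximal interval $I'_n$ with value $d'_n<d_x\le c+T$ and left endpoint $a'_n>c$ (if $a'_n=c$ with $I'_n$ open there, the rule forces $g(c)=0$, excluded); since $d'_n<c+T<a'_n+T$, the $g$-value at $a'_n$ is $\ne T$ (it equals $0$ if $I'_n$ is open at $a'_n$), so the left-endpoint rule gives $g(a'_n)=0$ or $\lim_{z\uparrow a'_n}g(z)=0$. Either way one produces points in $(c,a'_n]$, hence arbitrarily close to $c$ since $a'_n\le x'_n\downarrow c$, at which $g$ is arbitrarily close to $0$ --- contradicting $\lim_{z\downarrow c}g(z)=T>0$. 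This replacement in fact handles all of sub-case $g(c)\neq0$ uniformly; with it, your globalization closes, and your treatment of the equality case (constancy, exclusion of $\infty$-plateaus between equal finite values, and merging under the roughest partition) goes through.
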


The proof of this lemma is easy and omitted here.

Let $y_1<y_2$ be two arbitrary points on the real line. We can assume
that $y_2-y_1\leq T$, since otherwise the condition $L_T(f,y_2)\in
[y_1,y_1+T]$ can never be satisfied. There are two cases. Case 1:
$y_1$ and $y_2$ are in the same interval $I$, on which $g(x)=d-x$
or $g(x)=\infty$. Clearly, in this case $L_T(f,y_1)=L_T(f,y_2)$.
Case 2: $y_1$ and $y_2$ are not in the same interval. Say, $y_2\in
I_2$ and $y_1\notin I_2$, where $I_2=[a_2,b_2], [a_2,b_2),
(a_2,b_2]$ or $(a_2,b_2)$ is the largest interval containing $y_2$
on which $g(x)=d-x$ or $g(x)=\infty$. Notice that $L_T(f,y_1)\neq
L_T(f,y_2)$, since otherwise the monotonicity implies that
$L_T(f,x)=L_T(f,y_2)$ for all $x\in[y_1,y_2]$, contradicting with
the assumption that $I$ is the largest interval. Our goal is
therefore to prove that in this case,
$L_T(f,y_2)\in[y_1,y_1+T]\cap[y_2,y_2+T]=[y_2,y_1+T]$ implies
$L_T(f,y_1)\in[y_1,y_2)$.

Assume that $L_T(f,y_2)\in[y_2,y_1+T]$. Firstly, $L_T(f,y_1)$ can not be infinity. Otherwise, let $I_1$ be
the largest interval containing $y_1$ on which the location takes
value $\infty$. By the combination rule $\lim_{y\downarrow
b_1}g(y)=T$, where $b_1$ is the right endpoint of $I_1$. $b_1\geq
y_1$ so $y_2-b_1\leq y_2-y_1$. Meanwhile
$L_T(f,y_2)\in[y_2,y_1+T]$ implies that $g(y_2)=L_T(f,y_2)-y_2\leq
y_1+T-y_2$, thus $\lim_{y\downarrow b_1}g(y)-g(y_2)=T-g(y_2)\geq
y_2-y_1$. If equality actually holds for both this inequality and
the previous one, then $y_1=b_1$, and $\lim_{y\downarrow
b_1}g(y)-g(y_2)=y_2-y_1$, hence also $\lim_{y\downarrow
y_1}g(y)-g(y_2)=y_2-y_1$. By Lemma \ref{l:slow.decrease}, $y_1\geq
a_2$, where $a_2$ is the left endpoint of the maximal interval
$I_2$ containing $y_2$. Since $y_1\notin I_2$, $y_1=a_2$ and $I_2$
is open at $y_1$. However, by combination rule, this implies that
$g(y_1)=T\neq\infty$, contradiction. Thus the two inequalities can
not be equalities at the same time. As a result,
$\lim_{y\downarrow b_1}g(y)-g(y_2)>y_2-b_1$, which, however,
contradicts with Lemma \ref{l:slow.decrease}. Thus $L_T(f,y_1)\neq
\infty$.

Next, notice that $\lim_{y\downarrow
a_2}L_T(f,y)=L_T(f,y_2)\in[y_2,y_1+T]$. If $g(a_2)=T$, then
$g(a_2)-\lim_{g\downarrow a_2}g(y)\geq 0=\lim_{y\downarrow a_2}-a_2$.
According to Lemma \ref{l:slow.decrease}, this can only happen if
$a_2\in I_2$. However, $y_1\leq a_2 \leq L_T(f,a_2)\leq y_1+T$ and
$T=g(a_2)=L_T(f,a_2)-a_2$ implies that $y_1=a_2$. Together we have
$y_1\in I_2$, contradiction. Thus $g(a_2)\neq T$. Therefore by
combination rule, $\lim_{y\uparrow a_2}g(y)=0$. If $a_2<y_2$, then
by the monotonicity of $L_T(f,\cdot)$ given by Lemma
\ref{l:slow.decrease}, $L_T(f,y_1)\leq \lim_{y\uparrow
a_2}L_T(f,y)=a_2\in[y_1,y_2)$. Therefore we only need to consider
the case where $a_2=y_2$. Suppose that in this case
$L_T(f,y_1)\geq a_2=y_2$. By the monotonicity of $L_T(f,\cdot)$ and
the fact that $\lim_{y\uparrow a_2}L_T(f,y)=a_2=y_2$, $b_1$ must
be equal to $y_2$, where $b_1$ is the right endpoint of the
maximal interval $I_1$ containing $y_1$, and the inequality above can only be an equality. As a result, $I_1$
is open at $y_2$, therefore $g(y_2)=T$ by combination rule. This contradicts with the
assumption that $L_T(f,y_2)=g(y_2)+y_2\in[y_2,y_1+T]$. To
conclude, $L_T(f,y_1)<y_2$, hence $L_T(f,y_1)\in[y_1,y_2)$. The
second direction of Theorem \ref{thm:path} is therefore proved.

\end{proof}

\noindent\textbf{Acknowledgment.}

The author expresses grateful thanks to his Ph.D. advisor, Gennady Samorodnitsky, for his direction and helpful suggestions.

\end{document}